\documentclass[11pt,a4paper]{article}
\usepackage[T1]{fontenc}
\usepackage{amsmath,amssymb,amsthm,amsfonts}
\usepackage{xcolor}
\usepackage{booktabs}
\usepackage{float}
\usepackage[colorlinks=true,linkcolor=black,citecolor=black,urlcolor=blue]{hyperref}
\usepackage{geometry}
\newcommand{\new}[1]{#1}
\newcommand{\B}[1]{#1}
\newcommand{\C}[1]{#1}
\newcommand{\K}[1]{#1}
\newcommand{\V}[1]{#1}

\newtheorem{theorem}{Theorem}[section]
\newtheorem{proposition}[theorem]{Proposition}
\newtheorem{lemma}[theorem]{Lemma}
\newtheorem{corollary}[theorem]{Corollary}

\theoremstyle{definition}
\newtheorem{remark}[theorem]{Remark}
\newtheorem{question}[theorem]{Question}

\title{\textbf{Infinite families and sporadic solutions\\ of the equation $\varphi(n)=c+\alpha\sum_{p\mid n}\varphi(p)$}}

\author{Hac\`ene Belbachir\thanks{Corresponding author. ORCID: 0000-0001-8540-3033.}\\[2pt]
\small USTHB, Faculty of Mathematics, RECITS Laboratory,\\
\small Po. Box 32 El Alia, 16111 Bab Ezzouar, Algiers, Algeria\\
\small \texttt{hacenebelbachir@gmail.com}, \texttt{hbelbachir@usthb.dz}
\and
\B{Ravi Ranjan}\\[2pt]
\small \B{Department of Mathematics, Central University of South Bihar,}\\
\small \B{Gaya, Bihar 824236, India}\\
\small \K{\texttt{raviranjan23@cusb.ac.in}}
\and
Shubh N. Singh\\[2pt]
\small Department of Mathematics, Central University of South Bihar,\\
\small Gaya, Bihar 824236, India\\
\small \texttt{shubh@cub.ac.in}}

\date{October 23, 2026}

\begin{document}
\maketitle

\begin{abstract}
\noindent
Ranjan and Singh proved that for every composite $n$ which is not twice an odd prime, $\varphi(n)$ is at least one plus the sum of $\varphi$ over the prime divisors of $n$, with equality exactly for $n=4,12,30$. \new{We study the weighted equation $\varphi(n)=c+\alpha\sum_{p\mid n}\varphi(p)$. We determine the parameters $(c,\alpha)$ admitting infinite families of solutions, prove that all remaining solutions are finite in number and arise from an explicit linear mechanism, and compute them exactly in a range of parameters; the finiteness of $\{4,12,30\}$ becomes a special case. We also settle the analogous equations for the divisor functions $\tau$ and $\sigma$ and for the Jordan totients, and, from known results on the sum of the prime divisors, derive the full asymptotic expansion of the average deficiency.} \K{In the range $|c|\le10$, $1\le\alpha\le6$ the composite solution set is determined completely\V{, and the number and the size of the sporadic solutions are shown to be bounded by polynomials in $|c|+\alpha$}.} \B{Finally, replacing the prime values $\varphi(p)$ by their iterates $\varphi^{(k)}(p)$, we determine all solutions of $\varphi(n)=1+\sum_{p\mid n}\varphi^{(k)}(p)$: the solution sets are $\{1,4,12,30\}$, $\{1,3,4,10\}$ and, for every $k\ge3$, $\{1,3,4\}$; for the weighted iterated equation we exhibit an infinite family governed by the safe primes.}
\medskip

\noindent\textbf{2020 MSC:} 11A25 (primary); 11N37 (secondary).\\
\textbf{Keywords:} Euler's totient function, divisor functions, Jordan totient, prime--composite characterizations, average orders\B{, iterated totient}.
\end{abstract}

\section{Introduction}

Euler's totient function $\varphi(n)$ counts the positive integers up to $n$ that are coprime to $n$. Like most multiplicative functions its behaviour is governed by the prime factorization of the argument, and a recurring theme in elementary number theory is the classification of the integers at which an arithmetic identity, generically true for primes or generically failing for composites, admits exceptional solutions. The prototype is Lehmer's still-open problem \cite{Lehmer32} of whether $\varphi(n)\mid n-1$ can hold for composite $n$; a solved relative is Subbarao's theorem \cite{Subbarao74} that his primality congruence admits exactly the composite solutions $n\in\{4,6,22\}$.

Ranjan and Singh \cite{RanjanSingh25} recently added a new instance to this family. Observing that
\[
\varphi(4)=1+\varphi(2),\qquad
\varphi(12)=1+\varphi(2)+\varphi(3),\qquad
\varphi(30)=1+\varphi(2)+\varphi(3)+\varphi(5),
\]
they asked for which composite numbers such an identity can hold and proved:

\begin{theorem}[Ranjan--Singh \cite{RanjanSingh25}]\label{thm:RS}
Let $n$ be a composite number which is not twice an odd prime, and let $p_1,\dots,p_r$ be its distinct prime divisors. Then
\begin{equation}\label{eq:main}
\varphi(n)\;\ge\;1+\sum_{i=1}^{r}\varphi(p_i),
\end{equation}
with equality if and only if $n\in\{4,12,30\}$. Moreover $\varphi(2p)<1+\varphi(2)+\varphi(p)$ for every odd prime $p$.
\end{theorem}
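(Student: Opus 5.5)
We write $n=\prod_{i=1}^r p_i^{a_i}$ with $p_1<\dots<p_r$ and compare $\varphi(n)=\prod p_i^{a_i-1}(p_i-1)$ with $S(n)=1+\sum_i (p_i-1)$. The idea is that a product of numbers that are at least $2$ dominates their sum. The only obstruction is factors equal to $1$, which come from $p=2$ with $a=1$.

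\textbf{Step 1: prime powers.} For $r=1$, $n=p^a$ with $a\ge2$, we need $p^{a-1}(p-1)\ge p$. This reads $p^{a-2}(p-1)\ge1$, which always holds. Equality requires $p=2$ and $a=2$, that is, $n=4$.

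\textbf{Step 2: the general case.} For each $i$ set $x_i=p_i^{a_i-1}(p_i-1)\ge1$, so that $\varphi(n)=\prod x_i$ and $S(n)\le 1+\sum x_i$. I will use the elementary inequality $\prod x_i\ge 1+\sum x_i$ for reals $x_i\ge2$. This holds once there are enough factors or the factors are large. Precisely: for $x,y\ge2$ we have $xy\ge x+y$, with equality iff $x=y=2$, and $\prod x_i\ge \sum x_i$ follows by induction. The extra $+1$ has to be absorbed, and I handle it by cases.

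\emph{(a) $n$ odd, $r\ge2$.} All $x_i\ge2$ and $x_r\ge4$. Expanding, $\prod x_i-\sum x_i$ is at least $x_1x_2-x_1-x_2=(x_1-1)(x_2-1)-1\ge2$, since $x_1\ge 2$ and $x_2\ge4$. Hence the inequality is strict and there is no equality case.

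\emph{(b) $n$ even with $a_1\ge2$.} Now $x_1=2^{a_1-1}\ge2$, and the argument is the same. The case $r=2$ is where equality can occur, for instance $x_1=2$ with $x_2=2$ coming from $p_2=3$. This forces $n=4\cdot 3=12$. Indeed, $\varphi(12)=4=1+1+2$; note that $S$ uses $p_i-1$ rather than $x_i$, so I must track the exact equality $\prod x_i=1+\sum(p_i-1)$. That forces $a_i=1$ for odd $p_i$, and $x_1=2$, and small primes.

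\emph{(c) $n=2m$ with $m$ odd.} Here $x_1=1$, so $\varphi(n)=\varphi(m)$ while $S(n)=S(m)+1$. The claim thus reduces to $\varphi(m)\ge 2+\sum_{p\mid m}(p-1)$ for odd $m$ that are neither prime nor $1$. This sharpened inequality is the main obstacle, and I would establish it directly in three subcases. If $m=p^a$ with $a\ge2$, then $p^{a-1}(p-1)\ge p+1$ holds strictly for $p\ge3$. If $m$ has $r\ge2$ odd prime factors, the computation of (a) gives a slack of $\ge2$ in the squarefree case $m=pq$: $(p-1)(q-1)-(p-1)-(q-1)=(p-2)(q-2)-1\ge 2$, with equality iff $\{p,q\}=\{3,5\}$. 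This yields exactly $n=30$. For more factors or higher exponents the slack grows, and I would verify this by the same induction. Finally, $m=p$ prime gives $\varphi(2p)=p-1<p+1=S(2p)$, which is the last assertion of the theorem.

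Collecting the equality cases gives $\{4,12,30\}$. The delicate bookkeeping is confirming that the slack strictly increases when an extra prime or exponent is added, so that no further equality cases arise.
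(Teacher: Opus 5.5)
Your strategy matches the proof the paper attributes to Ranjan--Singh: Ecker's product--sum inequalities applied to $\varphi(n)=\prod_i x_i$. Step~1, case (a) and case (c) are sound. The reduction in (c) to $\varphi(m)\ge2+\sum_{p\mid m}(p-1)$ is correct, as is the identity $(p-1)(q-1)-(p-1)-(q-1)=(p-2)(q-2)-1$. Your claims that the slack grows also check out: adjoining an odd prime $q$ to $m'$ changes the slack by $(\varphi(m')-1)(q-2)-1>0$ once $m'$ has two odd prime factors, and raising an exponent multiplies $\varphi$ by $p\ge3$ without changing the sum.

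The gap is case (b). Your claim that \emph{the argument is the same} is false. In (a) the slack $\ge2$ came from $x_2\ge4$, whereas in (b) one can have $p_2=3$, $x_2=2$. Worse, the route announced in Step~2 cannot work there: bounding $S(n)\le1+\sum_i x_i$ and comparing with $\prod_i x_i$ fails at $n=12$, where $\prod x_i=4<5=1+\sum x_i$. This is exactly where your opening inequality $\prod x_i\ge1+\sum x_i$ breaks down. You notice that $S$ uses $p_i-1$ rather than $x_i$, but you never say where the $+1$ is absorbed. The phrase \emph{equality can occur, for instance\dots} neither proves the inequality nor shows that $12$ is the only equality case; nothing in your text excludes $r\ge3$.

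The missing line is that the prime $2$ itself supplies the extra unit. It contributes $\varphi(2)=1$ to $S(n)$ but $x_1=2^{a_1-1}\ge2$ to $\varphi(n)$. Hence, for $r\ge2$ (where all $x_i\ge2$),
\[
S(n)=2+\sum_{i\ge2}(p_i-1)\;\le\;x_1+\sum_{i\ge2}x_i\;=\;\sum_i x_i\;\le\;\prod_i x_i=\varphi(n).
\]
Equality throughout forces $a_1=2$, $a_i=1$ for $i\ge2$, and $\prod_i x_i=\sum_i x_i$. For factors $\ge2$ the last equality holds only when $r=2$ and $x_1=x_2=2$; for $r\ge3$ it is strict, which is Ecker's second inequality quoted in the paper. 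Thus $p_2=3$ and $n=12$. With this line inserted, your case analysis is complete and yields exactly $\{4,12,30\}$.
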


Their proof rests on two elementary product--sum inequalities borrowed from Ecker \cite{Ecker02}: $ab\ge a+b$ for $a,b\ge2$ (equality \textcolor{blue}{if and only if} $a=b=2$), and $a_1\cdots a_m>a_1+\cdots+a_m$ whenever $2\le a_1\le\cdots\le a_m$ and $m\ge3$.

\new{The purpose of the present paper is \B{fourfold}. First (\S\ref{sec:analogues}) we treat the analogous equation
\begin{equation}\label{eq:general}
f(n)\;=\;1+\sum_{p\mid n}f(p)
\end{equation}
for the divisor-count function $\tau$, the sum-of-divisors function $\sigma$, and the Jordan totients $J_k$, obtaining in each case a complete answer. The outcomes are strikingly different from one function to the next: for $\tau$ the composite solution set is the infinite family $\{p^2:p\text{ prime}\}$; for $\sigma$ and for $J_k$ ($k\ge2$) it is empty; for $\varphi$ it is the finite set $\{4,12,30\}$. Second (\S\ref{sec:weighted}--\S\ref{sec:simultaneous}) we study weighted and simultaneous versions of \eqref{eq:general}, where parity constraints, an exact classification of the infinite solution families, and a finiteness theorem give a complete structure theory --- from which the finiteness of the Ranjan--Singh exceptional set follows as a corollary\K{; in the range $|c|\le10$, $1\le\alpha\le6$ the solution sets are then computed completely\V{, and uniform polynomial bounds in $|c|+\alpha$ are proved}}. Third (\S\ref{sec:average}) we measure how far a typical integer is from satisfying \eqref{eq:main}: a simple identity reduces the average deficiency to classical results of Alladi--Erdős and De Koninck--Ivić, which yields its full asymptotic expansion. \B{Fourth (\S\ref{sec:iterated}) we replace the prime values $\varphi(p)$ in \eqref{eq:main} by their iterates $\varphi^{(k)}(p)$ and determine all solutions for every $k\ge1$: the monotonicity of the iterates reduces each case to Theorem \ref{thm:RS}, and the solution sets stabilise at $\{1,3,4\}$ \textcolor{blue}{for all $k\geq 3$}.}}

\section{Notation and preliminaries}

Throughout, $n=p_1^{a_1}\cdots p_r^{a_r}$ denotes the canonical factorization of an integer $n\ge2$, with $p_1<\cdots<p_r$, and $\omega(n)=r$ is the number of distinct prime divisors \textcolor{blue}{of $n$}. We write $\tau(n)=\sum_{d\mid n}1$, $\sigma(n)=\sum_{d\mid n}d$, and for $k\ge1$
\[
J_k(n)\;=\;n^{k}\prod_{p\mid n}\Bigl(1-\frac{1}{p^{k}}\Bigr),
\]
the Jordan totient, so that $J_1=\varphi$ and $J_k(p^a)=p^{k(a-1)}(p^{k}-1)$ \cite{ThajoddinVangipuram88}. All three of $\varphi,\tau,\sigma$ and every $J_k$ are multiplicative. On primes,
\[
\varphi(p)=p-1,\qquad \tau(p)=2,\qquad \sigma(p)=p+1,\qquad J_k(p)=p^{k}-1 .
\]
We shall use the elementary fact that $\varphi(n)$ is even for every $n\ge3$\B{, the fact that $\varphi(n)=n-1$ if and only if $n$ is prime, and the bound $\varphi(n)\le n/2$ for even $n$. For $k\ge1$, $\varphi^{(k)}$ denotes the $k$-fold iterate of $\varphi$, with $\varphi^{(1)}=\varphi$.}

\section{\new{The equation $f(n)=1+\sum_{p\mid n}f(p)$ for $\tau$, $\sigma$ and $J_k$}}\label{sec:analogues}

\new{
\subsection{The divisor-count function}

Since $\tau(p)=2$, equation \eqref{eq:general} for $f=\tau$ reads $\tau(n)=1+2\,\omega(n)$.

\begin{theorem}\label{thm:tau}
A composite number $n$ satisfies $\tau(n)=1+\sum_{p\mid n}\tau(p)$ if and only if $n=p^{2}$ for some prime $p$.
\end{theorem}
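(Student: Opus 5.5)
Since $\tau(p)=2$ for every prime, the equation reduces to $\tau(n)=1+2r$ with $r=\omega(n)$. Write $n=p_1^{a_1}\cdots p_r^{a_r}$, so that $\tau(n)=\prod_{i=1}^r(a_i+1)$. The plan is to compare this product with $1+2r$, using a product--sum inequality in the spirit of those quoted from Ecker.

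First, the converse. If $n=p^2$, then $\tau(n)=3=1+2\cdot1$, so $n$ is a solution.

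Now suppose $n$ is composite and $\prod_{i=1}^r(a_i+1)=1+2r$. Since the right-hand side is odd, every factor $a_i+1$ is odd. Hence every exponent $a_i$ is even, and in particular $a_i\ge2$, so $a_i+1\ge3$ for each $i$. This gives
\[
\tau(n)\ge 3^r.
\]
On the other hand, $3^r>1+2r$ for $r\ge2$: this holds at $r=2$ since $9>5$, and it is preserved by induction because $3^{r+1}=3\cdot3^r>3(1+2r)\ge 1+2(r+1)$. So the equation forces $r=1$.

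With $r=1$ we have $n=p^a$ and $a+1=3$. Therefore $a=2$ and $n=p^2$, which completes the classification.

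The only genuinely delicate point is the parity step: it excludes the exponent $a_i=1$ (factor $2$) and raises the minimum factor from $2$ to $3$. Without it, the bound $2^r\ge1+2r$ fails for small $r$, since $2<3$ and $4<5$. If parity were not used, one would instead have to treat $r\le2$ by hand, and the parity observation avoids that.
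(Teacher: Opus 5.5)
Your proof is correct and complete, and it differs from the paper's at one step. The paper uses only $a_i+1\ge2$, so $\tau(n)\ge2^r$. That beats $1+2r$ only for $r\ge3$, so the paper handles prime powers separately and disposes of $r=2$ by observing that $\tau(n)=5$ would be a prime value of a product $(a_1+1)(a_2+1)$ of two factors $\ge2$.

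You instead note that $\prod(a_i+1)=1+2r$ is odd, so every exponent is even; equivalently, $n$ must be a perfect square. This raises the bound to $3^r$, which disposes of all $r\ge2$ with a single inequality, and the case $r=1$ drops out at the end without a separate prime-power analysis.

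Your parity step is more uniform and explains directly why the solutions are squares. The paper's argument needs nothing beyond $a_i\ge1$ and the primality of $5$, in the same product--sum style as its proofs for $\sigma$ and $J_k$.
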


\begin{proof}
If $n=p^{a}$ is a prime power, the equation reads $a+1=3$, i.e.\ $a=2$; every $n=p^{2}$ is a solution and no other prime power is.

Suppose now $\omega(n)=r\ge2$. Since each exponent satisfies $a_i+1\ge2$,
\[
\tau(n)=\prod_{i=1}^{r}(a_i+1)\;\ge\;2^{r}.
\]
For $r\ge3$ we get $\tau(n)\ge2^{r}>1+2r$, because $2^{r}-2r-1$ is positive at $r=3$ and increasing; hence no solutions. For $r=2$ the equation demands $\tau(n)=5$, a prime value of $\tau$; but $\tau(n)=(a_1+1)(a_2+1)$ with both factors $\ge2$ is composite. No solutions exist with $r=2$, which completes the proof.
\end{proof}

Thus, in contrast with the finite exceptional set $\{4,12,30\}$ of Theorem \ref{thm:RS}, the $\tau$-analogue possesses the infinite solution family $\{4,9,25,49,121,\dots\}$ (the squares of primes, OEIS A001248).

\subsection{The sum-of-divisors function}

For $f=\sigma$ the outcome is the opposite extreme: the analogue of \eqref{eq:main} holds strictly for \emph{every} composite number, with no exceptional set whatsoever.

\begin{theorem}\label{thm:sigma}
For every composite $n$,
\[
\sigma(n)\;>\;1+\sum_{p\mid n}\sigma(p),
\]
and consequently the equation $\sigma(n)=1+\sum_{p\mid n}\sigma(p)$ has no composite solution.
\end{theorem}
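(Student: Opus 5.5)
The plan is to bound $\sigma(n)$ from below by the sum of a few of its divisors, and to check that these divisors already exceed $1+\sum_{p\mid n}(p+1)=1+r+\sum_{p\mid n}p$, where $r=\omega(n)$. The key observation is that $\sigma(n)$ counts every divisor of $n$. We may therefore discard all but a convenient subset of divisors, and the inequality stays strict whenever at least one positive divisor has been dropped. We split into two cases according to $r$.

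\emph{Prime powers ($r=1$).} Write $n=p^{a}$ with $a\ge2$. We must show $\sigma(n)>p+2$. Since $1,p,p^{2}$ all divide $n$, we get $\sigma(n)\ge1+p+p^{2}$. Because $p^{2}\ge4>1$, this gives $1+p+p^{2}>p+2$.

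\emph{Several primes ($r\ge2$).} Each of the divisors $1$, $p_1,\dots,p_r$ and $n$ is distinct, so
\[
\sigma(n)\;\ge\;1+\sum_{i}p_i+n .
\]
It therefore suffices to show $n>r$. This holds trivially, since $n\ge2^{r}>r$. The other divisors, such as $p_1p_2$ when $r\ge3$, are not needed.

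Combining the two cases gives $\sigma(n)>1+\sum_{p\mid n}\sigma(p)$ for every composite $n$. In particular the equation $\sigma(n)=1+\sum_{p\mid n}\sigma(p)$ has no composite solution.

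The only step that needs care is confirming that the divisors we keep are genuinely distinct. For $r\ge2$, the number $n$ is not a prime and not $1$, so it differs from $1$ and from each $p_i$. For $r=1$, the three divisors $1,p,p^{2}$ are distinct because $a\ge2$.
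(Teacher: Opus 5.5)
Your proof is correct and is essentially the paper's argument: keep the divisors $1,p_1,\dots,p_r,n$ and reduce everything to $n>\omega(n)$. The one difference is your separate prime-power case, which is unnecessary. For any composite $n$, including $n=p^{a}$ with $a\ge2$, the divisor $n$ already differs from $1$ and from every $p_i$, so the single bound $\sigma(n)-1-\sum_{p\mid n}\sigma(p)\ge n-\omega(n)>0$ covers all cases at once.
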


\begin{proof}
Because $n$ is composite, the integers $1,\,p_1,\dots,p_r,\,n$ are pairwise distinct divisors of $n$; hence
\[
\sigma(n)\;\ge\;1+\sum_{i=1}^{r}p_i+n .
\]
On the other hand $1+\sum_{p\mid n}\sigma(p)=1+\sum_{i=1}^{r}(p_i+1)=1+r+\sum_{i=1}^{r}p_i$. Subtracting,
\[
\sigma(n)-\Bigl(1+\sum_{p\mid n}\sigma(p)\Bigr)\;\ge\;n-r\;=\;n-\omega(n)\;>\;0,
\]
since $\omega(n)\le\log_2 n<n$ for all $n\ge2$.
\end{proof}

\subsection{Jordan totients of order $k\ge2$}

For the higher Jordan totients the left-hand side of \eqref{eq:general} grows polynomially in $n$ while the right-hand side grows only with the prime support, and again no composite solutions survive.

\begin{theorem}\label{thm:jordan}
Let $k\ge2$. For every composite $n$,
\[
J_k(n)\;>\;1+\sum_{p\mid n}J_k(p),
\]
so the equation $J_k(n)=1+\sum_{p\mid n}J_k(p)$ has no composite solution.
\end{theorem}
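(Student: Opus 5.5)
The plan is to compare $J_k(n)$ with $\sum_{p\mid n}p^k$, splitting into prime powers and numbers with at least two prime factors, much as in the proof of Theorem \ref{thm:sigma}. Since $J_k(p)=p^k-1$, the right-hand side equals
\[
1+\sum_{i=1}^{r}(p_i^{k}-1)=1-r+\sum_{i=1}^{r}p_i^{k}\le\sum_{i=1}^{r}p_i^{k},
\]
with the inequality holding because $r\ge1$. It therefore suffices to show $J_k(n)>\sum_{i}p_i^{k}$ for composite $n$.

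\emph{Prime powers.} Let $n=p^a$ with $a\ge2$. Then $J_k(n)=p^{k(a-1)}(p^k-1)\ge p^k(p^k-1)$. Since $p^k\ge4$, we have $p^k-1\ge3>1$, so $J_k(n)>p^k$, which is the required bound.

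\emph{At least two prime factors.} Let $r\ge2$. Multiplicativity together with $J_k(p^a)\ge J_k(p)$ gives
\[
J_k(n)\ge\prod_{i=1}^{r}(p_i^{k}-1).
\]
Put $b_i=p_i^k-1$. Each $b_i\ge3$, and the target becomes
\[
\prod_i b_i>\sum_i (b_i+1)=\sum_i b_i+r.
\]
Write $\prod b_i-\sum b_i-r=(\prod b_i-\sum b_i)-r$.

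\textbf{Case $r=2$.} The identity $b_1b_2-b_1-b_2-2=(b_1-1)(b_2-1)-3$ holds, and since $b_1\ge3$ and $b_2\ge8$ (the primes are distinct, so one of them is at least $3$ and $3^k-1\ge8$), the right side is at least $2\cdot7-3>0$.

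\textbf{Case $r\ge3$.} Induct on $r$. If $P>S+r$ with $P=\prod_{i<r}b_i$ and $S=\sum_{i<r}b_i$, then for the new factor $b=b_r\ge3$ we get
\[
Pb\ge 3P>P+2P>(S+r)+b+1
\]
once $2P\ge b+1$. That last condition does not hold automatically, because $b$ is the largest term. To fix this, order the primes increasingly and peel off the \emph{smallest} factor instead: $Pb_1$ with $b_1\ge3$ and $P\ge b_r\ge b_1$ gives $b_1P-P\ge2P\ge P+b_1\ge b_1+1$ and similar. The cleanest route is Ecker's inequality already quoted above, $a_1\cdots a_m>a_1+\cdots+a_m$ for $m\ge3$, applied to $a_i=b_i$. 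Since every $b_i\ge3$, the strictness can be sharpened: a simple induction shows $\prod b_i-\sum b_i\ge 3^{r}-3r$ (the function $x\mapsto xP-x$ is increasing), and $3^r-3r>r$ for $r\ge2$.

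The main obstacle is only this bookkeeping, namely getting a margin of size $r$ rather than mere strict inequality. I would handle it with the uniform estimate $\prod b_i-\sum b_i\ge 3^r-3r$, which follows because the difference is nondecreasing in each $b_i\ge3$ when the others are $\ge3$ (each partial derivative is $\prod_{j\ne i}b_j-1>0$). Minimizing at $b_i=3$ then gives a bound exceeding $r$. This also covers $r=2$, since $9-6=3>2$, so the separate $r=2$ computation can be dropped.
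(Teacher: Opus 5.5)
Your final argument is correct and is essentially the paper's proof. It uses the same split into prime powers and $\omega(n)\ge2$, the same bound $J_k(n)\ge\prod_i(p_i^k-1)$, and a product--sum inequality for factors $\ge3$. The differences are small: you aim at the slightly stronger target $\sum_i b_i+r$, obtained by bounding the right-hand side by $\sum_i p_i^k$. You also prove that inequality by coordinatewise monotonicity, with the minimum at $b_i=3$, whereas the paper inducts via $\prod_{i\le r}x_i>x_r\bigl(1+\sum_{i<r}x_i\bigr)\ge x_r+3\sum_{i<r}x_i$.

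In a write-up, drop the abandoned detours in the $r\ge3$ paragraph: the induction peeling off the largest factor, and the appeal to Ecker's inequality, which only gives $\prod_i b_i>\sum_i b_i$. Keep only the estimate $\prod_i b_i-\sum_i b_i\ge3^r-3r>r$.
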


\begin{proof}
Put $x_i=p_i^{k}-1=J_k(p_i)$ and note $x_i\ge2^{k}-1\ge3$ for $k\ge2$.

If $n=p^{a}$ with $a\ge2$, then $J_k(n)=p^{k(a-1)}(p^{k}-1)\ge p^{k}(p^{k}-1)>1+(p^{k}-1)=p^{k}$, the last inequality because $p^{k}-1\ge3$ implies $p^{k}(p^{k}-1)\ge 3p^{k}>p^{k}$.

If $r\ge2$, then by multiplicativity $J_k(n)\ge\prod_{i=1}^{r}x_i$, and it suffices to prove
\begin{equation}\label{eq:prodsum}
\prod_{i=1}^{r}x_i\;>\;1+\sum_{i=1}^{r}x_i\qquad(x_i\ge3,\ r\ge2).
\end{equation}
For $r=2$: $(x_1-1)(x_2-1)\ge4>2$, which rearranges to $x_1x_2>1+x_1+x_2$. For $r\ge3$, induction: if \eqref{eq:prodsum} holds for $r-1$ terms, then
$\prod_{i\le r}x_i>x_r\bigl(1+\sum_{i<r}x_i\bigr)\ge x_r+3\sum_{i<r}x_i>1+\sum_{i\le r}x_i$.
\end{proof}

\begin{remark}
The three theorems above, together with Theorem \ref{thm:RS}, exhibit the full spectrum of behaviours of equation \eqref{eq:general} within one natural family of multiplicative functions: an infinite solution set ($\tau$), a finite nonempty one ($\varphi$), and empty ones ($\sigma$, all $J_k$ with $k\ge2$). Heuristically, the size of the solution set is governed by the growth of $f(p^a)$ against $f(p)$: the slower the growth, the richer the solution set.
\end{remark}
}

\section{\new{The weighted equation: a complete structure theory}}\label{sec:weighted}

\new{
Consider, for integer parameters $c$ and $\alpha\ge1$, the weighted equation
\begin{equation}\label{eq:weighted}
\varphi(n)\;=\;c+\alpha\sum_{p\mid n}\varphi(p),
\end{equation}
of which Theorem \ref{thm:RS} is the case $(c,\alpha)=(1,1)$. Write $S_m:=\sum_{q\mid m}\varphi(q)$ for the prime-support sum, so that \eqref{eq:weighted} reads $\varphi(n)=c+\alpha S_n$. In this section we determine the structure of the composite solution set of \eqref{eq:weighted} completely: parity constraints (Theorem \ref{thm:parity}), an exact classification of the infinite families (Theorem \ref{thm:families}), and a finiteness theorem for what remains (Theorem \ref{thm:structure}). \K{The structure theorem is then made completely explicit in the range $|c|\le10$, $1\le\alpha\le6$ (Theorem \ref{thm:census})\V{, and uniform polynomial bounds in $|c|+\alpha$ are derived from it (Theorem \ref{thm:uniform})}.}

\subsection{Parity constraints}

\begin{theorem}[Parity constraints]\label{thm:parity}
Let $n\ge4$ be a composite solution of \eqref{eq:weighted}. Then:
\emph{(i)} if $c$ is odd and $\alpha$ is even, no such $n$ exists;
\emph{(ii)} if $c$ and $\alpha$ are both odd, $n$ is even;
\emph{(iii)} if $c$ is even and $\alpha$ is odd, $n$ is odd;
\emph{(iv)} if $c$ and $\alpha$ are both even, no parity restriction holds.
\end{theorem}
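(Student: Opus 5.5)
The plan is to reduce the equation modulo $2$. For composite $n\ge4$ we have $\varphi(n)$ even, so the equation forces
\[
c+\alpha S_n\equiv 0 \pmod 2 .
\]
Next we compute the parity of $S_n=\sum_{p\mid n}(p-1)$. Every odd prime contributes an even term, while $p=2$ contributes $1$. Hence
\[
S_n\equiv [\,2\mid n\,]\pmod 2,
\]
so $S_n$ is odd exactly when $n$ is even. These two observations drive every case.

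For (i), $c$ odd and $\alpha$ even give $c+\alpha S_n\equiv c\equiv1$, which contradicts the evenness of $\varphi(n)$. For (ii), $c$ and $\alpha$ odd give $c+\alpha S_n\equiv 1+S_n$, so $S_n$ must be odd and therefore $n$ is even. For (iii), $c$ even and $\alpha$ odd give $c+\alpha S_n\equiv S_n$, so $S_n$ must be even and therefore $n$ is odd.

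For (iv) the congruence is automatically satisfied, so the parity argument imposes no condition. To show that \emph{no restriction holds} in any stronger sense, I would exhibit both an even and an odd solution for some admissible $(c,\alpha)$. Since $c$ is a free parameter, one can take $\alpha=2$ and, for any composite $n$, set $c=\varphi(n)-2S_n$, which is even. For instance:
\begin{itemize}
\item $n=4$ gives $c=2-2=0$;
\item $n=9$ gives $c=6-4=2$.
\end{itemize}
So both parities occur. If (iv) is meant for each fixed pair $(c,\alpha)$, the claim is just that the mod-$2$ argument yields nothing, and the examples illustrate this.

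The only delicate point is the interpretation of (iv). Its rigorous content is simply that the congruence is vacuous, and I would state it that way, with the two examples above as witnesses. Everything else is routine parity bookkeeping, using the evenness of $\varphi(n)$ for $n\ge3$ recorded in the preliminaries.
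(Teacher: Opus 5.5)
Your proposal is correct and follows the paper's proof: reduce modulo $2$ using that $\varphi(n)$ is even for $n\ge3$ and that $S_n\equiv\mathbf 1_{2\mid n}\pmod 2$, then read off the four cases. The one difference is the witness for (iv): the paper uses a single pair, $(c,\alpha)=(2,2)$, which has both the even solution $n=8$ and the odd solution $n=9$. That settles the fixed-pair reading you were unsure about, and your own recipe $c=\varphi(n)-2S_n$ applied to $n=8$ gives exactly $c=2$.
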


\begin{proof}
For $n\ge3$ the value $\varphi(n)$ is even. On the other side, $\varphi(2)=1$ is the only odd value of $\varphi$ at a prime, whence $S_n\equiv\mathbf 1_{2\mid n}\pmod 2$ and the right-hand side of \eqref{eq:weighted} is congruent to $c+\alpha\,\mathbf 1_{2\mid n}$ modulo $2$. Requiring it to be even yields exactly the four cases. Case (iv) is realized: $(c,\alpha)=(2,2)$ has both the even solution $8$ and the odd solution $9$.
\end{proof}

\subsection{Classification of the infinite families}

The exclusion of $n=2p$ in Theorem \ref{thm:RS} is not an accident of the proof: the integers $2p$ form an exact infinite family for the neighbouring pair $(c,\alpha)=(-1,1)$, since $\varphi(2p)=p-1=S_{2p}-1$. This phenomenon is completely general.

\begin{theorem}[Infinite families]\label{thm:families}
Let $m\ge2$ be fixed. Every integer $n=mp$, with $p$ prime and $p\nmid m$, satisfies \eqref{eq:weighted} if and only if
\[
\alpha=\varphi(m)\qquad\text{and}\qquad c=-\varphi(m)\,S_m .
\]
We call such a pair $(c,\alpha)$ \emph{matched}, with modulus $m$. For each $\alpha$ there are only finitely many matched moduli, since $\varphi(m)=\alpha$ has finitely many solutions $m$.
\end{theorem}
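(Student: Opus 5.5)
The plan is to compute both sides of \eqref{eq:weighted} at $n=mp$ explicitly as functions of the prime $p$, and then compare coefficients. Fix $m\ge2$ and let $p$ be a prime with $p\nmid m$. By multiplicativity, $\varphi(mp)=\varphi(m)(p-1)$. Since $p\nmid m$, the prime support of $mp$ is that of $m$ together with $p$, so $S_{mp}=S_m+(p-1)$. Hence $n=mp$ satisfies \eqref{eq:weighted} if and only if
\[
\varphi(m)(p-1)\;=\;c+\alpha S_m+\alpha(p-1).
\]
Rearranged, this reads $(\varphi(m)-\alpha)(p-1)=c+\alpha S_m$.

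For sufficiency, I would take $\alpha=\varphi(m)$ and $c=-\varphi(m)S_m$. Then both sides of the rearranged identity vanish, so every admissible $p$ gives a solution. The $n=mp$ are also composite, because $m\ge2$.

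For necessity, I read the statement as saying that the identity holds for every prime $p\nmid m$. Since there are infinitely many primes not dividing $m$, it is enough to use two of them, $p\ne p'$. Subtracting the two resulting identities gives $(\varphi(m)-\alpha)(p-p')=0$, which forces $\alpha=\varphi(m)$. Substituting back then gives $c=-\alpha S_m=-\varphi(m)S_m$.

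I would also remark, to make the theorem robust to the weaker reading "for infinitely many such $p$", that this already suffices. The map $p\mapsto(\varphi(m)-\alpha)(p-1)$ takes a single value only if its coefficient is zero, so the same conclusion follows.

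The finiteness claim for matched moduli follows from the standard bound $\varphi(m)\ge\sqrt{m/2}$, or more crudely $\varphi(m)\to\infty$. Then $\varphi(m)=\alpha$ forces $m\le2\alpha^2$, so for each $\alpha$ only finitely many moduli $m$ occur, and each determines a unique $c$.

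The only step that needs care is the bookkeeping of $S_{mp}$. There the hypothesis $p\nmid m$ is essential: if $p\mid m$, the support does not change and $\varphi(mp)=p\varphi(m)$, which is a different linear function of $p$. As a check, the case $m=2$ recovers $(c,\alpha)=(-1,1)$, matching the $2p$ family noted before the theorem.
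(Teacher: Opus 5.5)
Your proposal is correct and follows the paper's proof. Like the paper, you compute $\varphi(mp)=\varphi(m)(p-1)$ and $S_{mp}=S_m+p-1$ and then match the coefficient of $p-1$ and the constant term; your two-prime subtraction is an explicit form of that coefficient comparison, and the bound $\varphi(m)\ge\sqrt{m/2}$ simply makes quantitative the paper's one-line finiteness remark.
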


\begin{proof}
For $p\nmid m$, multiplicativity gives $\varphi(mp)=\varphi(m)(p-1)$ and $S_{mp}=S_m+p-1$. Equation \eqref{eq:weighted} becomes the identity in $p$
\[
\varphi(m)(p-1)\;=\;c+\alpha S_m+\alpha(p-1),
\]
which holds for infinitely many (equivalently, for all admissible) primes $p$ \textcolor{blue}{if and only if} the coefficients of $p-1$ and the constant terms match: $\varphi(m)=\alpha$ and $c+\alpha S_m=0$.
\end{proof}

For example, the matched pairs in the range $|c|\le10$, $1\le\alpha\le6$ are
\[
\begin{array}{c|c|c}
m & (c,\alpha) & \text{family}\\\hline
2 & (-1,1) & \{2p\}\\
4 & (-2,2) & \{4p\}\\
3 & (-4,2) & \{3p\}\\
6 & (-6,2) & \{6p\}\\
8 & (-4,4) & \{8p\}
\end{array}
\]
and the pair $(1,1)$ of Theorem \ref{thm:RS} is \emph{unmatched}: $\varphi(m)=1$ forces $m=2$, which would require $c=-1\ne1$.

\subsection{Finiteness outside the families, and the sporadic mechanism}

\begin{lemma}\label{lem:mechanism}
Let $n$ be a composite solution of \eqref{eq:weighted}, let $P$ be the largest prime factor of $n$ and write $n=m'P^{a}$ with $P\nmid m'$, $a\ge1$. If $a=1$ then
\begin{equation}\label{eq:mech}
\bigl(\varphi(m')-\alpha\bigr)(P-1)\;=\;c+\alpha S_{m'} ,
\end{equation}
while if $a\ge2$ then $(P-1)\bigl(\varphi(m')P^{a-1}-\alpha\bigr)=c+\alpha S_{m'}$.
\end{lemma}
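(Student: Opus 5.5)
The plan is a direct computation from multiplicativity, following the proof of Theorem \ref{thm:families}. The only structural fact needed is that $P\nmid m'$, so $\gcd(m',P^{a})=1$. Multiplicativity then gives $\varphi(n)=\varphi(m')\varphi(P^{a})=\varphi(m')P^{a-1}(P-1)$. Because the prime divisors of $n$ are the disjoint union of those of $m'$ and $\{P\}$, the prime-support sum splits as $S_n=S_{m'}+\varphi(P)=S_{m'}+(P-1)$.

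The case $m'=1$ (so $n=P^a$, $a\ge2$ since $n$ is composite) fits the same scheme. In that case we use the conventions $\varphi(1)=1$ and $S_1=0$ (empty sum), and the formulas above still hold.

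Next we substitute into \eqref{eq:weighted}:
\[
\varphi(m')P^{a-1}(P-1)=c+\alpha S_{m'}+\alpha(P-1).
\]
Moving $\alpha(P-1)$ to the left and factoring out $P-1$ gives
\[
(P-1)\bigl(\varphi(m')P^{a-1}-\alpha\bigr)=c+\alpha S_{m'},
\]
which is the claimed identity for $a\ge2$. Setting $a=1$, so that $P^{a-1}=1$, yields \eqref{eq:mech}.

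There is no real obstacle here. The only points needing care are the bookkeeping of $S_{m'}$ when $m'=1$, and checking that the prime-support sum really is additive over the coprime factorization, which holds because $P$ is counted exactly once regardless of $a$. That $P$ is the \emph{largest} prime factor is not used in the identity itself. It matters only for later applications, where it bounds $m'$ in terms of $P$.
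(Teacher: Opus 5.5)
Your proposal is correct and is essentially the paper's proof: the paper also derives both identities directly from $\varphi(n)=\varphi(m')P^{a-1}(P-1)$ and $S_n=S_{m'}+P-1$, substituted into \eqref{eq:weighted}. Your extra bookkeeping for $m'=1$ (where compositeness forces $a\ge2$) and your remark that maximality of $P$ is not needed for the identity are both accurate.
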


\begin{proof}
Immediate from $\varphi(n)=\varphi(m')P^{a-1}(P-1)$ and $S_n=S_{m'}+P-1$.
\end{proof}

Equation \eqref{eq:mech} is the engine of the whole subject. When $m'$ is a matched modulus both sides vanish and $P$ is unconstrained: this reproduces the infinite families. Otherwise $P$ is \emph{determined} by $m'$, and the point of the next theorem is that only finitely many $m'$ can produce a legitimate $P$.

\begin{theorem}[Structure theorem]\label{thm:structure}
For every pair $(c,\alpha)$, the set of composite solutions of \eqref{eq:weighted} is the union of the families $\{mp:p\ \text{prime},\,p\nmid m\}$ over the finitely many matched moduli $m$, together with a finite sporadic set $F(c,\alpha)$. In particular, if $(c,\alpha)$ is unmatched ---as is $(1,1)$--- the composite solution set is finite.
\end{theorem}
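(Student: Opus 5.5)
The plan is to bound every composite solution not covered by a family, using Lemma \ref{lem:mechanism}. Let $n$ be a composite solution, $P$ its largest prime factor, and $n=m'P^{a}$ with $P\nmid m'$. The basic inequality is that every prime $q\mid m'$ satisfies $q<P$. Hence
\[
S_{m'}\le\omega(m')(P-1),
\]
and therefore
\[
c+\alpha S_{m'}\le |c|+\alpha\,\omega(m')(P-1).
\]

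\emph{Case $a\ge2$.} Here $m'=1$ is allowed, which covers prime powers. The second identity of Lemma \ref{lem:mechanism} gives
\[
(P-1)\bigl(\varphi(m')P^{a-1}-\alpha\bigr)\le |c|+\alpha\,\omega(m')(P-1).
\]
Dividing by $P-1\ge1$ yields
\[
\varphi(m')P^{a-1}\le \alpha\bigl(\omega(m')+1\bigr)+|c| .
\]
Since $P^{a-1}\ge2$, this already forces $\varphi(m')\le\alpha(\omega(m')+1)+|c|$. By the growth lemma below, this bounds $m'$ within a finite set. With $m'$ fixed, the same inequality bounds $P^{a-1}$, hence both $P$ and $a$. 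So only finitely many $n$ arise in this case.

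\emph{Case $a=1$.} Here $m'\ge2$ because $n$ is composite. From \eqref{eq:mech},
\[
\varphi(m')(P-1)=c+\alpha S_{m'}+\alpha(P-1)\le |c|+\alpha\bigl(\omega(m')+1\bigr)(P-1),
\]
so again $\varphi(m')\le \alpha(\omega(m')+1)+|c|$, and $m'$ ranges over a finite set. For each such $m'$ there are two subcases.
\begin{itemize}
\item If $\varphi(m')\ne\alpha$, then \eqref{eq:mech} determines $P-1=(c+\alpha S_{m'})/(\varphi(m')-\alpha)$ uniquely. This gives at most one $n$ for this $m'$.
\item If $\varphi(m')=\alpha$, then the left side of \eqref{eq:mech} vanishes, so $c+\alpha S_{m'}=0$. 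By Theorem \ref{thm:families}, $m'$ is then a matched modulus, and $n=m'P$ lies in the corresponding family.
\end{itemize}
So every solution outside the families comes from finitely many choices of $(m',P,a)$; these form the finite set $F(c,\alpha)$. Conversely, Theorem \ref{thm:families} shows each family consists entirely of solutions. Some family members $mp$ may have $p$ smaller than the largest prime of $m$, but they are in the family anyway, so the decomposition is valid. If $(c,\alpha)$ is unmatched there are no families, and the composite solution set equals $F(c,\alpha)$, which is finite.

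\emph{Growth lemma (main step).} The remaining point, and the one needing care, is that
\[
\varphi(m)\le \alpha(\omega(m)+1)+|c|
\]
holds for only finitely many $m$. I would show that $\varphi(m)-\alpha\,\omega(m)\to\infty$. If $m$ has $r=\omega(m)$ distinct primes, then $\varphi(m)\ge\prod_{i}(p_i-1)$. The $i$-th smallest prime satisfies $p_i-1\ge i-1$ for every $i$ (indeed $p_i\ge i+1$). Hence this product is at least $1\cdot 2\cdot 4\cdot 6\cdots$, which is at least $(r-1)!$ and so is superlinear in $r$. The inequality therefore bounds $r$. With $r$ bounded, the inequality bounds $\varphi(m)$ itself. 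Finally, $\varphi(m)\ge\sqrt{m/2}$ (or simply the fact that $\varphi(m)\to\infty$) leaves only finitely many $m$.
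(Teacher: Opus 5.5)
Your proof is correct and follows the paper's argument: you use the decomposition $n=m'P^{a}$ of Lemma \ref{lem:mechanism}, the bound $S_{m'}\le\omega(m')(P-1)$, division by $P-1$ to get $\varphi(m')\le\alpha(\omega(m')+1)+|c|$ (resp.\ $\varphi(m')P^{a-1}\le\alpha(\omega(m')+1)+|c|$), and the growth of $\varphi$ against $\omega$. Your departures are minor simplifications. Dividing the full identity $\varphi(m')(P-1)=c+\alpha S_{m'}+\alpha(P-1)$ by $P-1$ removes the paper's case split on the sign of $\varphi(m')-\alpha$ (and of $\varphi(m')P^{a-1}-\alpha$), and the factorial growth of $\prod_{p\mid m}(p-1)$ in $\omega(m)$ replaces the paper's comparison of $\omega(m)\le\log_2 m$ with $\varphi(m)\ge\sqrt{m/2}$.
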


\begin{proof}
Let $n=m'P^{a}$ be a solution as in Lemma \ref{lem:mechanism} which does not belong to a matched family. We use the elementary bounds $\varphi(m)\ge\sqrt{m/2}$ and $\omega(m)\le\log_2 m$, and we note that $P$ exceeds every prime factor of $m'$ (being the largest prime factor of $n$, and coprime to $m'$); thus, with $Q$ the largest prime factor of $m'$ (set $Q=1$ if $m'=1$), we have $P\ge Q+1$ and $S_{m'}\le\omega(m')(Q-1)$.

\emph{Case $a=1$.} If $\varphi(m')=\alpha$ then, $m'$ being unmatched, $c+\alpha S_{m'}\ne0$ and \eqref{eq:mech} is contradictory; besides, $\varphi(m')=\alpha$ already confines $m'$ to a finite set. So assume $\varphi(m')\ne\alpha$; then \eqref{eq:mech} determines $P-1=(c+\alpha S_{m'})/(\varphi(m')-\alpha)$, at most one prime for each $m'$. If moreover $\varphi(m')>\alpha$, the requirement $P-1\ge Q$ forces
\[
Q\;\le\;\frac{|c|+\alpha\,\omega(m')(Q-1)}{\varphi(m')-\alpha},
\qquad\text{hence}\qquad
\varphi(m')\;<\;\alpha+|c|+\alpha\,\omega(m')\;\le\;\alpha+|c|+\alpha\log_2 m' .
\]
Since $\varphi(m')\ge\sqrt{m'/2}$ grows faster than $\log_2 m'$, only finitely many $m'$ qualify; the finitely many $m'$ with $\varphi(m')<\alpha$ are handled trivially. Each surviving $m'$ contributes at most one solution.

\emph{Case $a\ge2$.} Here $(P-1)\bigl(\varphi(m')P^{a-1}-\alpha\bigr)=c+\alpha S_{m'}$. If the second factor is nonpositive, then $\varphi(m')P^{a-1}\le\alpha$, which bounds $m'$, $P$ and $a$. Otherwise it is at least $1$, and since $S_{m'}\le\omega(m')(Q-1)\le\omega(m')(P-1)$, dividing by $P-1$ gives
\[
\varphi(m')P^{a-1}\;\le\;\alpha+|c|+\alpha\,\omega(m')\;\le\;\alpha+|c|+\alpha\log_2 m' .
\]
As $\varphi(m')\ge\sqrt{m'/2}$, this bounds $m'$, then $P^{a-1}$, hence $P$ and $a$. In all cases only finitely many solutions arise.
\end{proof}

\begin{corollary}\label{cor:RSfinite}
The finiteness of the exceptional set in Theorem \ref{thm:RS} is a special case of Theorem \ref{thm:structure}: the pair $(1,1)$ is unmatched, and its sporadic set is $F(1,1)=\{4,12,30\}$. The elements $12$ and $30$ arise from \eqref{eq:mech}: for $n=12=4\cdot3$, $(\varphi(4)-1)(3-1)=2=1+S_4$, and for $n=30=6\cdot5$, $(\varphi(6)-1)(5-1)=4=1+S_6$; the element $4=2^{2}$ arises from the case $a\ge2$ of Lemma \ref{lem:mechanism}, with $m'=1$: $(2-1)(1\cdot2-1)=1=1+S_1$.
\end{corollary}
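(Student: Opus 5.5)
The corollary has three parts, and I would prove them in order. First, $(1,1)$ is unmatched. Second, Theorem \ref{thm:structure} then says the composite solution set of $\varphi(n)=1+S_n$ is exactly $F(1,1)$. Third, we identify $F(1,1)=\{4,12,30\}$ and exhibit the mechanism producing each element.

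For the first step, Theorem \ref{thm:families} requires $\alpha=\varphi(m)=1$. With $m\ge2$ this forces $m=2$. Then $c=-\varphi(2)S_2=-1\neq 1$, so no matched modulus exists. Theorem \ref{thm:structure} therefore contains no families, and the composite solution set is the finite set $F(1,1)$.

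To identify $F(1,1)$ I would simply invoke Theorem \ref{thm:RS}. That theorem gives equality exactly at $\{4,12,30\}$ among composites not of the form $2p$. It also shows that $n=2p$ with $p$ odd gives strict inequality $\varphi(2p)<1+S_{2p}$, so $2p$ is never a solution. Hence the composite solutions of \eqref{eq:weighted} with $(c,\alpha)=(1,1)$ are exactly $4,12,30$. There is also the consistency point that $2p$ is the $(-1,1)$ family, not a $(1,1)$ one.

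As an optional self-contained check, one could rederive the finite set from the bounds in the proof of Theorem \ref{thm:structure}.
\begin{itemize}
\item With $a=1$: either $\varphi(m')\le \alpha=1$, giving $m'\in\{1,2\}$ (excluded or contradictory), or $\varphi(m')<2+\omega(m')$. The latter leaves only $m'\in\{3,4,6\}$.
\item With $a\ge2$: $\varphi(m')P^{a-1}\le 2+\omega(m')$ gives only $m'=1$, $P=2$, $a=2$.
\end{itemize}
This finite check is the only place any real work hides, and it is routine.

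Finally, I would verify the three displayed instances of Lemma \ref{lem:mechanism} by direct arithmetic.
\begin{itemize}
\item $n=12$: $m'=4$, $P=3$, and $(\varphi(4)-1)(3-1)=2=1+S_4$, since $S_4=1$.
\item $n=30$: $m'=6$, $P=5$, and $(2-1)\cdot4=4=1+S_6$, since $S_6=3$.
\item $n=4$: $a=2$, $m'=1$, $P=2$, and $(2-1)(1\cdot2-1)=1=1+S_1$, since $S_1=0$.
\end{itemize}
I expect no obstacle beyond bookkeeping. The main subtlety is keeping the conventions $S_1=0$ and $\varphi(1)=1$ consistent in the $m'=1$ case.
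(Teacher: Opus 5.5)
Your proposal is correct and is essentially the paper's own argument. It shows unmatchedness from $\varphi(m)=1\Rightarrow m=2\Rightarrow c=-\varphi(2)S_2=-1\neq1$, gets finiteness from Theorem \ref{thm:structure}, reads off $F(1,1)=\{4,12,30\}$ from Theorem \ref{thm:RS} (including its strict inequality for $2p$), and verifies the three mechanism identities directly.

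There is one small slip in your optional self-contained check. For $a\ge2$, the crude bound $\varphi(m')P^{a-1}\le 2+\omega(m')$ does not by itself isolate $n=4$, because $(m',P,a)=(2,3,2)$ also satisfies it ($1\cdot3\le3$). So $n=18$ has to be discarded via the exact equation, since $(3-1)(3-1)=4\neq2=1+S_2$. In the same way, $m'=3$ in the case $a=1$ has to be discarded because it forces $P=4$, which is not prime.
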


\begin{corollary}\label{cor:oddeven}
If $c$ is odd and $\alpha$ is even ---for instance $(c,\alpha)=(1,2)$--- equation \eqref{eq:weighted} has no composite solution at all (Theorem \ref{thm:parity}(i)).
\end{corollary}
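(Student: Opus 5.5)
The corollary follows directly from Theorem \ref{thm:parity}(i), so the plan is mainly to recheck the parity argument carefully for the concrete instance and to make sure nothing is lost at small $n$. First fix $c$ odd and $\alpha$ even, and let $n$ be an arbitrary composite. Then $n\ge4$, so $\varphi(n)$ is even; this uses the preliminary fact that $\varphi(n)$ is even for $n\ge3$. The right-hand side of \eqref{eq:weighted} is $c+\alpha S_n$. Since $\alpha$ is even, $\alpha S_n$ is even whatever the parity of $S_n$, and so $c+\alpha S_n\equiv c\equiv 1\pmod 2$.

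An even number cannot equal an odd number, so \eqref{eq:weighted} fails for every composite $n$. This is precisely case (i) of Theorem \ref{thm:parity}. It is also worth noting that this argument does not need the finer observation $S_n\equiv\mathbf 1_{2\mid n}\pmod 2$ used in the proof of that theorem: once $\alpha$ is even, the contribution $\alpha S_n$ vanishes modulo $2$ regardless of the parity of $S_n$.

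For the example $(c,\alpha)=(1,2)$, I will simply check that $c=1$ is odd and $\alpha=2$ is even, so the general statement applies.

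There is no genuine obstacle here. The only point needing care is that Theorem \ref{thm:parity} is stated for $n\ge4$, which is the same as ``composite'' since the smallest composite number is $4$. I will also remark that the statement says nothing about prime $n$, which are excluded by hypothesis: for instance $n=2$ gives $\varphi(2)=1=c+\alpha\cdot1$ only if $c=1-\alpha$, and that is irrelevant to the corollary.
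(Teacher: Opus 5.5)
Your proof is correct and is the paper's argument for Theorem \ref{thm:parity}(i): $\varphi(n)$ is even for composite $n\ge4$, while $c+\alpha S_n$ is odd. You are right that the parity of $S_n$ is not needed here; the paper computes $S_n \bmod 2$ only because it handles all four parity cases at once.
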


\subsection{Exact determination of the sporadic sets}

Theorem \ref{thm:structure} is effective, and for the five matched pairs above the bounds are small enough to compute the sporadic sets exactly.

\begin{lemma}[Explicit bounds]\label{lem:bounds}
Let $(c,\alpha)$ be one of the five matched pairs (so $|c|\le6$, $\alpha\le4$), and let $n=m'P^{a}$ be a sporadic composite solution, decomposed as in Lemma \ref{lem:mechanism}. Then $\omega(m')\le3$ and $\varphi(m')\le18$, hence $m'\le648$; moreover $P\le83$ and $a\le5$.
\end{lemma}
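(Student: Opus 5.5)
We follow the two cases of the proof of Theorem \ref{thm:structure}, now with the concrete parameters. Throughout, $(c,\alpha)$ is one of $(-1,1),(-2,2),(-4,2),(-6,2),(-4,4)$, so $|c|\le6$ and $\alpha\le4$. The only tools needed are the bound $S_{m'}\le\omega(m')(Q-1)\le\omega(m')(P-1)$ and the fact that $P>Q$, where $Q$ is the largest prime factor of $m'$.

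\emph{Case $a=1$.} If $\varphi(m')=\alpha$, then $m'$ lies in the finite list of solutions of $\varphi(m')=\alpha$. For each such $m'$ that is not the matched modulus, \eqref{eq:mech} has left side $0$ and right side $c+\alpha S_{m'}\neq0$, which is contradictory. If $\varphi(m')<\alpha\le4$, then $m'\le12$ and the bounds hold trivially. So assume $\varphi(m')>\alpha$, and write $\omega=\omega(m')$. Then \eqref{eq:mech} gives
\[
(\varphi(m')-\alpha)(P-1)\le|c|+\alpha\omega(Q-1)<|c|+\alpha\omega(P-1),
\]
hence $\varphi(m')<\alpha+\alpha\omega+|c|/(P-1)$.

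To bound $\omega$, we use that $\varphi(m')\ge\prod_{i\le\omega}(q_i-1)$ over the first $\omega$ primes, i.e.\ $\varphi(m')\ge1,2,8,48,\dots$. With $\alpha\le4$ and $|c|\le6$, the case $\omega=4$ requires $48<4+16+6=26$, which is false. Hence $\omega\le3$, and then $\varphi(m')<4+12+6=22$.

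The stated bound $18$ should come from checking the five pairs individually rather than using the worst case. For example, if $\alpha=4$ then $c=-4$, and a negative $c$ only helps: in that case the inequality reads $\varphi(m')-\alpha\le\alpha S_{m'}/(P-1)$. One also uses that $\varphi(m')$ is even whenever $m'\ge3$.

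Given $\varphi(m')\le18$, the classical bound $m'\le 2\varphi(m')^2$ gives $m'\le648$, consistent with $\varphi(m)\ge\sqrt{m/2}$.

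The bound on $P$ then comes from \eqref{eq:mech}:
\[
P-1=\frac{c+\alpha S_{m'}}{\varphi(m')-\alpha}\le\alpha S_{m'}.
\]
We bound $S_{m'}$ over the finitely many admissible $m'$, or use $S_{m'}\le\omega(Q-1)$ with $Q\le\varphi(m')+1\le19$, to obtain $P\le83$.

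\emph{Case $a\ge2$.} If $\varphi(m')P^{a-1}\le\alpha\le4$, then $P^{a-1}\le4$, so $P\le3$ and $a\le3$. Otherwise, as in the proof of Theorem \ref{thm:structure},
\[
\varphi(m')P^{a-1}\le\alpha+|c|/(P-1)+\alpha\omega,
\]
and the same $\omega\le3$ argument bounds the right side by about $22$. Then $P^{a-1}\le22$ with $P\ge2$ gives $a\le5$. This also gives $P\le22$ and $\varphi(m')\le11$, which are within the stated limits.

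\emph{Main obstacle.} Sharpening the crude constants (22 and the $P$-bound) to exactly $18$ and $83$ is the delicate step. I expect to need a short pair-by-pair inspection, using parity (Theorem \ref{thm:parity}) and the fact that $c<0$ for every matched pair, which makes the right-hand sides strictly smaller than the worst-case estimates. I would first try to get $18$ from $\alpha(1+\omega)+c/(P-1)$ with the sign of $c$ taken into account, and to get $83$ from the maximum of $1+\alpha S_{m'}$ over $m'$ with $\omega(m')\le3$ and $\varphi(m')\le18$.
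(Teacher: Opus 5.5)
\paragraph{There is a gap: the constants $18$ and $83$ are never proved.}
Your plan is the same as the paper's, and your case $a\ge2$ is complete. You split on $a$, discard $\varphi(m')=\alpha$ via matchedness, and bound $\omega(m')$ by setting $\varphi(m')\ge\prod_{q\mid m'}(q-1)\ge48$ against the linear bound from \eqref{eq:mech}. You then finish with $m'\le2\varphi(m')^2$ and $P-1\le|c|+\alpha S_{m'}$.

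The gap is in the case $a=1$, $\varphi(m')>\alpha$, which is exactly where the stated constants come from.
\begin{itemize}
\item Bounding $|c|/(P-1)$ by $6$ gives only $\varphi(m')<22$, i.e.\ $\varphi(m')\le20$. So neither $\varphi(m')\le18$ nor $m'\le648$ is established.
\item Your fallback $S_{m'}\le\omega(m')(Q-1)\le3\cdot18$ gives only $P\le1+4\cdot54=217$, not $P\le83$.
\end{itemize}
You name this as the ``main obstacle'' but leave it open. So as written, the proposal proves weaker constants than the lemma asserts.

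\paragraph{The missing observation is small and needs no pair-by-pair inspection.}
For $a=1$ you have $m'\ge2$, so $Q\ge2$ and $P-1\ge Q\ge2$. Hence $|c|/(P-1)\le3$ (the paper writes this as $|c|/Q$). Your own strict inequality then gives $\varphi(m')<4+4\cdot3+3=19$, i.e.\ $\varphi(m')\le18$.

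Your remark that $c<0$ helps also works, and is sharper. The chain $(\varphi(m')-\alpha)(P-1)=c+\alpha S_{m'}<\alpha\,\omega(m')(P-1)$ gives $\varphi(m')<\alpha(1+\omega(m'))\le16$.

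\paragraph{Bounding $P$.}
Replace the lossy bound on $S_{m'}$ by a finite check. Every prime $q\mid m'$ satisfies $q\le19$, and over the $m'$ with $\varphi(m')\le18$ one finds $S_{m'}\le19$, with the maximum at $m'=38$. Equivalently, $S_{m'}\le\varphi(m')+1$ by Lemma \ref{lem:SleqPhi}, which rests only on Theorem \ref{thm:RS}. Then $P\le1+|c|+\alpha S_{m'}\le1+6+4\cdot19=83$.

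\paragraph{A minor point.}
The subcase $\varphi(m')<\alpha$ is not quite ``trivial'' for the $P$-bound. It still needs the one-line estimate $P-1\le|c|+\alpha S_{m'}$ with $m'\in\{2,3,4,6\}$, which gives $P\le19$.
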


\begin{proof}
Suppose first $a=1$; then $m'\ge2$ and $Q\ge2$, where $Q$ is the largest prime factor of $m'$. One checks that each of the five pairs has a unique matched modulus; hence if $\varphi(m')=\alpha$, either $n$ lies in the matched family or $c+\alpha S_{m'}\ne0$ and \eqref{eq:mech} fails. If $\varphi(m')<\alpha\le4$ then $m'\in\{2,3,4,6\}$, $S_{m'}\le3$ and $P\le1+|c|+\alpha S_{m'}\le19$. If $\varphi(m')>\alpha$, then from \eqref{eq:mech} and $P-1\ge Q$,
\[
Q\;\le\;P-1\;=\;\frac{c+\alpha S_{m'}}{\varphi(m')-\alpha}\;\le\;\frac{|c|+\alpha\,\omega(m')(Q-1)}{\varphi(m')-\alpha},
\]
whence $\varphi(m')\le\alpha+|c|/Q+\alpha\,\omega(m')\le7+4\,\omega(m')$. Since $\varphi(m')\ge\prod_{q\mid m'}(q-1)\ge1\cdot2\cdot4\cdot6=48$ as soon as $\omega(m')\ge4$, we conclude $\omega(m')\le3$ and $\varphi(m')\le19$, hence $\varphi(m')\le18$ since $\varphi(m')$ is even; the bound $m'\le2\cdot18^{2}=648$ follows from $\varphi(m)\ge\sqrt{m/2}$. Every prime $q\mid m'$ then satisfies $q-1\le18$, and a direct check gives $S_{m'}\le19$ (the maximum being $S_{38}=19$); therefore $P\le1+|c|+\alpha S_{m'}\le1+6+4\cdot19=83$.

Now let $a\ge2$. If $\varphi(m')P^{a-1}\le\alpha\le4$ we are in a finite explicit range. Otherwise, as in the proof of Theorem \ref{thm:structure}, $\varphi(m')P^{a-1}\le\alpha+|c|+\alpha\,\omega(m')\le10+4\,\omega(m')$; the same comparison with $\prod_{q\mid m'}(q-1)$ gives $\omega(m')\le3$, hence $\varphi(m')P^{a-1}\le22$. Thus $P^{a-1}\le22$, so that $P\le19$ and $a\le5$.
\end{proof}

\begin{theorem}[Exact sporadic sets]\label{thm:sporadic}
For the five matched pairs in the range $|c|\le10$, $1\le\alpha\le6$, the sporadic sets are:
\begin{gather*}
F(-1,1)=F(-2,2)=F(-4,2)=\varnothing,\\
F(-6,2)=\{10\},\qquad
F(-4,4)=\{77,\,90,\,182,\,210\}.
\end{gather*}
Consequently the composite solutions of $\varphi(n)=-1+\sum_{p\mid n}\varphi(p)$ are exactly the integers $2p$; those of $\varphi(n)=-2+2\sum\varphi(p)$ exactly the $4p$; those of $\varphi(n)=-4+2\sum\varphi(p)$ exactly the $3p$; those of $\varphi(n)=-6+2\sum\varphi(p)$ exactly $10$ and the $6p$; and those of $\varphi(n)=-4+4\sum\varphi(p)$ exactly $77$, $90$, $182$, $210$ and the $8p$.
\end{theorem}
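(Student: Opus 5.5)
The plan is to reduce Theorem \ref{thm:sporadic} to a finite, explicit search using Lemma \ref{lem:bounds}. For each of the five matched pairs, every sporadic composite solution has the form $n=m'P^{a}$ as in Lemma \ref{lem:mechanism}. By the lemma, $\omega(m')\le3$, $\varphi(m')\le18$, $m'\le648$, $P\le83$ and $a\le5$. We then enumerate in the two cases of Lemma \ref{lem:mechanism}.

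\emph{Case $a=1$.} For each $m'$ with $\varphi(m')\ne\alpha$ and $\varphi(m')\le18$, compute $N=c+\alpha S_{m'}$ and $D=\varphi(m')-\alpha$. We need $D\mid N$, the quotient $P=1+N/D$ to be a prime, and $P$ to exceed the largest prime factor $Q$ of $m'$. When $\varphi(m')<\alpha$, only $m'\in\{2,3,4,6\}$ occur, and we treat them the same way, requiring $N/D>0$. Values $m'$ with $\varphi(m')=\alpha$ other than the matched modulus are excluded automatically, since then $N\ne0$. The set $\{m':\varphi(m')\le18\}$ is small and standard (inverse-totient tables up to 18), so this is a table computation.

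\emph{Case $a\ge2$.} We only need $P\le19$, $a\le5$ and $\varphi(m')P^{a-1}\le22$ (or $\le\alpha$). For each such triple $(m',P,a)$ we check $(P-1)(\varphi(m')P^{a-1}-\alpha)=c+\alpha S_{m'}$ directly. This is a handful of cases.

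The verification then consists of exhibiting the hits. For example, for $(-6,2)$ take $n=10=2\cdot5$: $\varphi(10)=4=-6+2(1+4)$. We must also confirm that $10$ is not of the form $6p$, which it is not. For $(-4,4)$ we exhibit the four solutions:
\begin{itemize}
\item $77=7\cdot11$: $60=-4+4(6+10)$;
\item $90=2\cdot3^2\cdot5$: $24=-4+4\cdot7$;
\item $182=2\cdot7\cdot13$: $72=-4+4\cdot19$;
\item $210=2\cdot3\cdot5\cdot7$: $48=-4+4\cdot13$.
\end{itemize}
None of these is $8p$. For the remaining pairs we show that every candidate fails. The parity constraints of Theorem \ref{thm:parity} prune the search here: $(-1,1)$ forces $n$ even, while $(-2,2)$, $(-4,2)$ and $(-6,2)$ allow both parities.

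The main obstacle is bookkeeping, not any conceptual step: the enumeration must be complete. In particular, $n=210$ has $\omega(m')=3$ with $m'=30$, $\varphi(30)=8$, and the case $a\ge2$ with $m'$ having several primes (as in $90=m'\cdot5$, $a=1$, but with a square inside $m'$) must not be overlooked. I would organize the check by listing all $m'$ with $\varphi(m')\le18$ grouped by value of $\varphi$, computing $S_{m'}$ for each, and tabulating $(N,D)$ per pair. I would cross-check by a direct brute-force search of $n$ up to the implied bound, $n\le648\cdot83^{\,}$ for $a=1$ and much smaller for $a\ge2$. Once the tables are done, the "consequently" statements follow immediately by combining with Theorem \ref{thm:structure} and the matched families of Theorem \ref{thm:families}.
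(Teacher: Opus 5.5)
Your proposal is correct and takes the same approach as the paper. The paper likewise uses Lemma~\ref{lem:bounds} to confine every sporadic solution to triples $(m',P,a)$ with $m'\le648$, $P\le83$, $a\le5$, enumerates the relation of Lemma~\ref{lem:mechanism} for each of the five pairs, and discards members of the matched families; your explicit checks of $10$ for $(-6,2)$ and of $77,90,182,210$ for $(-4,4)$ are right, and your parity remarks are accurate though not needed.
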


\begin{proof}
By Lemma \ref{lem:bounds} every sporadic solution arises, through the canonical decomposition $n=m'P^{a}$, from a triple $(m',P,a)$ in the explicit finite range $m'\le648$, $P\le83$, $a\le5$ satisfying the mechanism of Lemma \ref{lem:mechanism}. Enumerating all such triples for each of the five pairs, discarding those whose product lies in the matched family, yields precisely the sets displayed. (Each surviving element is then verified directly; e.g.\ $n=90=18\cdot5$ for $(-4,4)$: $(\varphi(18)-4)(5-1)=8=-4+4S_{18}$, and $\varphi(90)=24=-4+4(1+2+4)$.)
\end{proof}

\K{
\subsection{The complete census in the range $|c|\le10$, $1\le\alpha\le6$}\label{sec:census}

The bounds of Lemma \ref{lem:bounds} concern the five matched pairs only. We now carry out the same programme for all $126$ pairs $|c|\le10$, $1\le\alpha\le6$, following the canonical decomposition $n=m'P^{a}$ of Lemma \ref{lem:mechanism}; we use freely that $P>Q$, where $Q$ denotes the largest prime factor of $m'$ ($Q=1$ if $m'=1$). The first step is to read Theorem \ref{thm:RS} as a universal inequality.

\begin{lemma}\label{lem:SleqPhi}
For every positive integer $m$, $S_m\le\varphi(m)+1$\V{, with equality if and only if $m=2p$ with $p$ an odd prime}.
\end{lemma}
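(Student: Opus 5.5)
We prove $S_m\le\varphi(m)+1$ by reading Theorem \ref{thm:RS} as a universal inequality, splitting $m$ by type: $m=1$, primes, twice an odd prime, and all other composites. Throughout we use $S_m=\sum_{q\mid m}(q-1)$ and $\varphi(p)=p-1$.

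\emph{Small and prime cases.} For $m=1$ we have $S_1=0<2=\varphi(1)+1$, so the inequality is strict. For $m=p$ prime, $S_p=p-1=\varphi(p)<\varphi(p)+1$, again strict.

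\emph{Composites other than $2p$.} If $m$ is composite and not twice an odd prime, Theorem \ref{thm:RS} gives directly
\[
\varphi(m)\ge 1+S_m .
\]
Hence $S_m\le\varphi(m)-1<\varphi(m)+1$, which is strict with room to spare. Note that the equality cases $4,12,30$ of Theorem \ref{thm:RS} give $S_m=\varphi(m)-1$, not $\varphi(m)+1$, so they produce no equality in the lemma.

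\emph{The case $m=2p$.} Here $m=2p$ with $p$ an odd prime, and we compute
\[
S_{2p}=1+(p-1)=p,\qquad \varphi(2p)=p-1 .
\]
So $S_{2p}=\varphi(2p)+1$ exactly. This is consistent with the last assertion of Theorem \ref{thm:RS}, which says $\varphi(2p)<1+S_{2p}$.

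Combining the four cases proves both the inequality and the characterisation of equality: equality holds precisely in the last case. No step is a real obstacle. The only point needing care is to confirm that the equality cases of Theorem \ref{thm:RS} do not give equality in the lemma, which the second case settles.
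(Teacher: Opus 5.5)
Your proof is correct and follows the paper's argument: you check $m=1$, $m$ prime and $m=2p$ by hand, and handle every other composite with Theorem \ref{thm:RS}, which gives $S_m\le\varphi(m)-1$. You state the equality characterisation more explicitly than the paper does, but the route is the same.
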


\begin{proof}
For $m=1$, $m$ prime and $m=2p$ the inequality is immediate ($0\le2$, $p-1\le p$, $p\le p$). If $m$ is composite and not twice an odd prime, Theorem \ref{thm:RS} gives $1+S_m\le\varphi(m)$, hence $S_m<\varphi(m)+1$.
\end{proof}

\begin{proposition}[Case $a=1$]\label{prop:census-a1}
Let $|c|\le10$, $1\le\alpha\le6$, and let $n=m'P$ (with $a=1$, so $m'\ge2$) be a sporadic composite solution of \eqref{eq:weighted}.
\begin{enumerate}
\item[\textup{(i)}] If $\varphi(m')<\alpha$, then $m'\in\{2,3,4,5,6,8,10,12\}$, $S_{m'}\le5$ and $P\le41$.
\item[\textup{(ii)}] If $\varphi(m')>\alpha$, then $\varphi(m')<11+6\,\omega(m')$ and $\omega(m')\le3$; moreover $m'\le34$ if $\omega(m')=1$, $m'\le92$ if $\omega(m')=2$, and $m'\le232$ if $\omega(m')=3$.
\end{enumerate}
The case $\varphi(m')=\alpha$ produces no sporadic solution. The resulting solutions are listed in Tables \ref{tab:a1w1}, \ref{tab:a1w2} and \ref{tab:a1w3}.
\end{proposition}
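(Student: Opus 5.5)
The plan is to work throughout with the mechanism equation \eqref{eq:mech} of Lemma \ref{lem:mechanism}, namely $(\varphi(m')-\alpha)(P-1)=c+\alpha S_{m'}$. We use that $P>Q$, where $Q$ is the largest prime factor of $m'$; this gives $P-1\ge Q$ and also $S_{m'}\le\omega(m')(Q-1)$. The three cases for $\varphi(m')$ versus $\alpha$ are handled separately.

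\emph{The case $\varphi(m')=\alpha$.} Equation \eqref{eq:mech} then forces $c+\alpha S_{m'}=0$. With $\varphi(m')=\alpha$, this says exactly that $(c,\alpha)$ is matched with modulus $m'$ (Theorem \ref{thm:families}). Hence $n=m'P$ lies in the matched family and is not sporadic, so this case contributes nothing.

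\emph{Case (i): $\varphi(m')<\alpha\le6$.} Then $\varphi(m')\le5$, and since $\varphi$ is even except at $1,2$, we get $\varphi(m')\in\{1,2,4\}$. Listing the preimages gives $m'\in\{2,3,4,6\}\cup\{5,8,10,12\}$, the value $m'=1$ being excluded because $a=1$ requires $m'\ge2$. For these $m'$ one reads off directly $S_{m'}\le5$; the maximum is $S_{10}=5$ (the values are $S_5=4$, $S_{12}=3$, $S_8=1$). Now $\alpha-\varphi(m')\ge1$ and $P-1=-(c+\alpha S_{m'})/(\alpha-\varphi(m'))$, so $P-1\le|c|+\alpha S_{m'}\le10+30=40$, which gives $P\le41$.

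\emph{Case (ii): $\varphi(m')>\alpha$.} From $Q\le P-1=(c+\alpha S_{m'})/(\varphi(m')-\alpha)$ and $S_{m'}\le\omega(m')(Q-1)$ we get
\[
(\varphi(m')-\alpha)\,Q\le|c|+\alpha\,\omega(m')(Q-1).
\]
Dividing by $Q\ge2$ yields $\varphi(m')-\alpha<|c|/Q+\alpha\,\omega(m')$, hence $\varphi(m')<\alpha+|c|+\alpha\,\omega(m')\le 16+6\omega(m')$. To reach the stated $11+6\omega(m')$, I would use $|c|/Q\le 5$ (since $Q\ge2$) together with integrality: $\varphi(m')\le\alpha+5+\alpha\,\omega(m')-\epsilon$, so $\varphi(m')<6+5+6\omega(m')$.

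This fixes the number of prime factors. If $\omega(m')\ge4$, then $\varphi(m')\ge\prod_{q\mid m'}(q-1)\ge1\cdot2\cdot4\cdot6=48>35$, a contradiction, so $\omega(m')\le3$. The bound on $\varphi(m')$ becomes $\varphi(m')<17$, $23$, $29$ for $\omega(m')=1,2,3$ respectively.

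The size bounds on $m'$ then come from enumerating the preimages of $\varphi$ below these thresholds, restricted to the given $\omega$.
\begin{itemize}
\item $\omega=1$: prime powers with $\varphi\le16$; the largest is $m'=17$ or $32$. Here I expect the stated $34$ to require the extra constraint that $\varphi(m')$ is even and at most $16$, perhaps sharpened by a finer use of $|c|/Q$.
\item $\omega=2$: integers with two prime factors and $\varphi\le22$.
\item $\omega=3$: integers with three prime factors and $\varphi\le28$.
\end{itemize}
In each case one takes the maximal preimage; this must be checked carefully.

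The main obstacle is matching the precise stated constants ($34$, $92$, $232$). This likely needs the sharper inequality that keeps the $Q$-dependence, namely $\varphi(m')\le\alpha+|c|/Q+\alpha\,\omega(m')(1-1/Q)$. Large $m'$ typically have large $Q$, which makes the $|c|/Q$ term small. I would redo the enumeration of $\varphi$-preimages with this $Q$-dependent bound, checking $\omega$ and $Q$ for each candidate. Finally, each surviving $m'$ determines at most one $P$, which is checked for primality and for $P>Q$ to populate the tables.
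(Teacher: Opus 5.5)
Your treatment of the case $\varphi(m')=\alpha$, of part (i), and your derivation of $\varphi(m')<11+6\,\omega(m')$ coincide with the paper's argument. The integrality/$\epsilon$ device is unnecessary, since strictness already comes from $S_{m'}\le\omega(m')(Q-1)<\omega(m')\,Q$.

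There is a small slip in excluding $\omega(m')\ge4$. You compare $48$ only with $35=11+6\cdot4$. The threshold $11+6\,\omega(m')$ grows with $\omega(m')$, so you need a lower bound for $\prod_{q\mid m'}(q-1)$ that also grows. For example, $48\cdot5^{\omega(m')-4}$ works, as in the paper: the fifth and later primes contribute factors $q-1\ge10$.

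The genuine gap is the last claim of (ii): you never prove $m'\le34$, $92$, $232$, and you misdiagnose what is needed. These constants are not maximal $\varphi$-preimages, and no $Q$-dependent sharpening is required. The missing step is the elementary inequality
\[
\varphi(\ell)=\ell\prod_{q\mid\ell}\Bigl(1-\frac1q\Bigr)\ge\frac{\ell}{2^{\omega(\ell)}},
\qquad\text{hence}\qquad
m'\le2^{\omega(m')}\varphi(m')<2^{\omega(m')}\bigl(11+6\,\omega(m')\bigr).
\]
The right-hand side is exactly $2\cdot17=34$, $4\cdot23=92$ and $8\cdot29=232$ for $\omega(m')=1,2,3$.

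Your remark that the stated $34$ might require constraints beyond your enumeration value $32$ is backwards. The bound $34$ is weaker than $32$, and any sharper inequality could only lower it.

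Your enumeration route would in fact work if carried out. The largest $m'$ with the required $\omega$ and $\varphi(m')\le16,22,28$ are $32$, $54$, $90$ respectively, all within the stated bounds. As written, however, you leave it undone (``must be checked carefully'') and frame the goal as reproducing the constants exactly. So the size bounds of (ii), which make the search behind the tables finite and explicit, remain unproved in your proposal.
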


\begin{proof}
If $\varphi(m')=\alpha$, equation \eqref{eq:mech} forces $c+\alpha S_{m'}=0$, so $m'$ is a matched modulus and $n$ lies in its family: no sporadic solution arises.

(i) Since $\varphi$ takes no odd value except $1$, $\varphi(m')<\alpha\le6$ gives $\varphi(m')\in\{1,2,4\}$, i.e.\ $m'\in\varphi^{-1}(1)\cup\varphi^{-1}(2)\cup\varphi^{-1}(4)=\{1,2\}\cup\{3,4,6\}\cup\{5,8,10,12\}$, and $m'\ge2$. Hence $S_{m'}\le S_{10}=5$, and \eqref{eq:mech} gives $P-1=(c+\alpha S_{m'})/(\varphi(m')-\alpha)\le|c|+\alpha S_{m'}\le40$.

(ii) Since $P>Q$ we have $P-1\ge Q$, and $S_{m'}<\sum_{q\mid m'}q\le Q\,\omega(m')$. Dividing \eqref{eq:mech} by $P-1$,
\[
\varphi(m')-\alpha=\frac{c+\alpha S_{m'}}{P-1}\le\frac{c}{Q}+\frac{\alpha S_{m'}}{Q}<\frac{10}{2}+\alpha\,\omega(m')\le5+6\,\omega(m'),
\]
whence $\varphi(m')<11+6\,\omega(m')$. If $\omega(m')\ge4$ then $\varphi(m')\ge1\cdot2\cdot4\cdot6\cdot5^{\,\omega(m')-4}>11+6\,\omega(m')$, so $\omega(m')\le3$. Finally $\varphi(\ell)=\ell\prod_{q\mid\ell}(1-1/q)\ge\ell/2^{\omega(\ell)}$ gives $m'\le2^{\omega(m')}\varphi(m')$, i.e.\ $m'\le2\cdot17=34$, $4\cdot23=92$, $8\cdot29=232$ according as $\omega(m')=1,2,3$. For each admissible $m'$, \eqref{eq:mech} determines $P$, and the enumeration yields the three tables.
\end{proof}

\begin{proposition}[Case $a\ge2$]\label{prop:census-a2}
Let $|c|\le10$, $1\le\alpha\le6$, and let $n=m'P^{a}$ with $a\ge2$ be a composite solution of \eqref{eq:weighted}. Then $P\in\{2,3,5,7\}$, and:
\begin{enumerate}
\item[\textup{(i)}] if $\varphi(m')P^{a-1}\le6$, then $(m',P,a)$ is one of the seven triples $(1,2,2)$, $(1,2,3)$, $(1,3,2)$, $(1,5,2)$, $(2,3,2)$, $(2,5,2)$, $(4,3,2)$, i.e.\ $n\in\{4,8,9,25,18,50,36\}$;
\item[\textup{(ii)}] if $\varphi(m')P^{a-1}>6$, then $n\in\{16,32,27,54,72,\V{75,100,150},49,98\}$.
\end{enumerate}
The corresponding pairs $(c,\alpha)$ are listed in Tables \ref{tab:a2small} and \ref{tab:a2large}.
\end{proposition}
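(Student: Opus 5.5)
The plan is to start from the second identity of Lemma \ref{lem:mechanism},
\[
(P-1)\bigl(\varphi(m')P^{a-1}-\alpha\bigr)=c+\alpha S_{m'},\qquad a\ge2,
\]
and to split according to the sign of the second factor. Write $X:=\varphi(m')P^{a-1}$. Since $a\ge2$, $X$ is a multiple of $P$, so $X\ge P\ge2$.

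\emph{Case (i): $X\le6$.} Here $P^{a-1}\le6$, which forces $P\in\{2,3,5\}$, with $a\le3$ for $P=2$ and $a=2$ otherwise. Then $\varphi(m')\le 6/P^{a-1}$, and since $\varphi$ takes no odd value except $1$, we get $\varphi(m')\in\{1,2\}$ (or $\varphi(m')=1$ when $P^{a-1}\ge4$). Because every prime factor of $m'$ is smaller than $P$, a short list of candidates remains. For $P=2$ we must have $m'=1$, giving $a\in\{2,3\}$. For $P=3$ we have $m'\in\{1,2,4\}$ (the value $m'=4$ uses $\varphi(4)=2$ and $2\cdot3=6$). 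For $P=5$ we have $m'\in\{1,2\}$; the options $m'=3,4,6$ would give $X=10>6$. These are exactly the seven listed triples. For each triple the admissible pairs $(c,\alpha)$, with $c=(P-1)(X-\alpha)-\alpha S_{m'}$, are then read off and tabulated; this is pure bookkeeping.

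\emph{Case (ii): $X>6$.} Now $X-\alpha\ge1$. I would bound $S_{m'}$ more sharply than in Theorem \ref{thm:structure}. Each prime $q\mid m'$ satisfies $q<P$, so $S_{m'}\le\omega(m')(P-2)$. Dividing the identity by $P-1$ gives
\[
X\le\alpha+\frac{|c|}{P-1}+\alpha\,\omega(m')\le 6+\frac{10}{P-1}+6\,\omega(m').
\]
If $\omega(m')\ge1$ then $P\ge3$. Comparing $X\ge P\prod_{q\mid m'}(q-1)$ with the right-hand side bounds both $\omega(m')$ and $P$. For instance, $\omega(m')=1$ gives $X\le17$; $\omega(m')=2$ needs $P\ge5$ and gives $X\le20.5$, while $\varphi(m')P\ge2\cdot5=10$. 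Combined with $P^{a-1}\mid X$, this bounds $P$: $P^{a-1}\le X\le 26$ or so, hence $P\le7$ for $a=2$ once the full factor $\varphi(m')P$ is taken into account. That is the source of the claim $P\in\{2,3,5,7\}$. With $m'=1$ we have $S_{m'}=0$, so $X=P^{a-1}\le6+10/(P-1)$, which leaves $P=2$ with $a\le5$, $P=3$ with $a=3$, and $P=5,7$ with $a=2$.

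The main work, and the step I expect to be the obstacle, is this finite enumeration. For each surviving $(m',P,a)$ one must check whether some $(c,\alpha)$ in the range satisfies the identity with $X>6$, i.e.\ whether $(P-1)(X-\alpha)-\alpha S_{m'}\in[-10,10]$ for some $\alpha\le6$. The surviving list should be $16,32,27,54,72,75,100,150,49,98$. The delicate part is to make the bounds tight enough that the candidate list is provably complete without missing cases. Examples that need care are $m'=3,4,6,12$ with $P=5$, or $m'$ with $\omega(m')=2$ and $P=7$. I would organize the check by $P$, computing for each $m'$ the interval of $\alpha$ that makes $|c|\le10$.
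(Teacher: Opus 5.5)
Your route is valid but differs from the paper's. You bound $S_{m'}$ by $\omega(m')(P-2)$, as in the structure theorem, and then cap $\omega(m')$ via $\varphi(m')\ge\prod_{q\mid m'}(q-1)$. The paper instead reads Theorem~\ref{thm:RS} as the universal inequality $S_{m'}\le\varphi(m')+1$ (Lemma~\ref{lem:SleqPhi}). Before any case split, this gives $\varphi(m')\bigl(P^{a-1}(P-1)-6\bigr)\le 10+6P$. Since $P^{a-1}(P-1)-6\ge (P-3)(P+2)$, every $P\ge 11$ is killed at once, and $\varphi(m')$ is bounded directly for each $(P,a)$.

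Your approach does not need Theorem~\ref{thm:RS}. It also handles $(P,a)=(3,2)$, $m'=2^e$ uniformly: your bound gives $X\le 17$, hence $e\le 3$. The paper's inequality is vacuous there, since $P^{a-1}(P-1)=6$, so it needs a separate argument to reach $n=72$. The price of your route is a looser candidate list.

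That looseness is where your sketch slips. The bound $X\le 6+\frac{10}{P-1}+6\,\omega(m')$ does \emph{not} give $P\le 7$, even after using $X\ge\varphi(m')P$. Three triples survive it:
\begin{itemize}
\item $(m',P,a)=(2,11,2)$, with $X=11\le 6+1+6$;
\item $(6,7,2)$, with $X=14\le 6+\frac{10}{6}+12$;
\item $(12,5,2)$, with $X=20\le 6+\frac{10}{4}+12$.
\end{itemize}
So $P\in\{2,3,5,7\}$ is not a consequence of your inequality. If you stop the enumeration at $P\le 7$ on that basis, you have made an unjustified exclusion.

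The fix is easy. Keep the three triples in the enumeration and compute $c=\varphi(n)-\alpha S_n$:
\begin{itemize}
\item $n=242$ gives $c=110-11\alpha$;
\item $n=294$ gives $c=84-9\alpha$;
\item $n=300$ gives $c=80-7\alpha$.
\end{itemize}
For $\alpha\le 6$ all three values are at least $30>10$. Alternatively, use the exact $S_{m'}$ in place of $\omega(m')(P-2)$: with $S_2=1$ and $S_6=S_{12}=3$, all three triples fail the bound. With this correction your enumeration returns exactly the ten integers of (ii), and $P\in\{2,3,5,7\}$ then follows.

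A minor point: in your $m'=1$ subcase, $P=5$, $a=2$ gives $X=5$. That triple belongs to case (i), not case (ii).
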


\begin{proof}
Write \eqref{eq:weighted} as $\varphi(m')P^{a-1}(P-1)=c+\alpha(S_{m'}+P-1)$. By Lemma \ref{lem:SleqPhi}, $S_{m'}+P-1\le\varphi(m')+P$, hence
\begin{equation}\label{eq:a2bound}
\varphi(m')\bigl(P^{a-1}(P-1)-6\bigr)\;\le\;10+6P .
\end{equation}
Since $P^{a-1}\ge P$, this gives $\varphi(m')(P-3)(P+2)\le10+6P$, which fails for every prime $P\ge11$\V{, since $(P-3)(P+2)-(6P+10)=P^{2}-7P-16>0$ for $P\ge11$}. Thus $P\in\{2,3,5,7\}$.

(i) If $\varphi(m')P^{a-1}\le6$ then $P^{a-1}\le6$, so $P\in\{2,3,5\}$ and $a=2$ unless $P=2$. For $P=2$, $m'=1$ and $2^{a-1}\le6$ gives $a\in\{2,3\}$. For $P=3$, $a=2$ and $\varphi(m')\le2$ with $3\nmid m'$ give $m'\in\{1,2,4\}$. For $P=5$, $a=2$ and $\varphi(m')=1$ give $m'\in\{1,2\}$. For each of the seven triples, $c=\varphi(n)-\alpha S_n$ is an affine function of $\alpha$, tabulated in Table \ref{tab:a2small}.

(ii) \V{If $P^{a-1}(P-1)>6$, inequality \eqref{eq:a2bound} bounds everything: for $P=2$ ($m'=1$), $2^{a-1}\le28$, so $a\le5$ and $n\in\{16,32\}$ (the cases $a\le3$ being in (i)); for $P=3$ and $a=3$, $12\varphi(m')\le28$, so $m'\in\{1,2,4\}$; $n=108$ gives $c=36-3\alpha\ge18$, hence $n\in\{27,54\}$, while $a\ge4$ is impossible; for $P=5$, $a=2$ and $14\varphi(m')\le40$, so $\varphi(m')\le2$, $m'\in\{1,2,3,4,6\}$ and $n\in\{75,100,150\}$ ($n=25,50$ being in (i)); for $P=7$, $a=2$ and $36\varphi(m')\le52$, so $m'\in\{1,2\}$ and $n\in\{49,98\}$. The only remaining triples have $P^{a-1}(P-1)\le6<\varphi(m')P^{a-1}$, i.e.\ $(P,a)=(3,2)$ with $m'=2^{e}$, $e\ge3$; then \eqref{eq:weighted} reads $3\cdot2^{e}=c+3\alpha\le28$, whence $e=3$ and $n=72$. The pairs $(c,\alpha)$ with $|c|\le10$ for these ten integers }are those of Table \ref{tab:a2large}.
\end{proof}

\begin{theorem}[Complete census]\label{thm:census}
Let $|c|\le10$ and $1\le\alpha\le6$. The composite solution set of \eqref{eq:weighted} is the union of the matched family of Theorem \ref{thm:families} (for the five matched pairs) and of the sporadic set $F(c,\alpha)$ displayed in Table \ref{tab:summary}. \V{There are exactly $200$ sporadic pairs $\bigl((c,\alpha),n\bigr)$, involving $116$ distinct integers $n$, the largest being $690=2\cdot3\cdot5\cdot23$ for $(c,\alpha)=(2,6)$; one has $|F(c,\alpha)|\le9$, with equality only for $(6,6)$, and $F(c,\alpha)=\varnothing$ for $53$ of the $126$ pairs.}
\end{theorem}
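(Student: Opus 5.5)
The census is assembled from the canonical decomposition $n=m'P^{a}$ of Lemma \ref{lem:mechanism}. Every composite solution falls into exactly one of three cases: $a=1$ with $\varphi(m')=\alpha$, $a=1$ with $\varphi(m')\ne\alpha$, or $a\ge2$. The overall argument is therefore a reduction to a finite, explicit enumeration.

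\emph{Step 1 (families).} When $a=1$ and $\varphi(m')=\alpha$, equation \eqref{eq:mech} forces $c+\alpha S_{m'}=0$. So $m'$ is a matched modulus and $n$ lies in a family of Theorem \ref{thm:families}. Running through $\alpha\le6$ and the finitely many $m'$ with $\varphi(m')=\alpha$, I will check that the only matched pairs with $|c|\le10$ are the five in the table following Theorem \ref{thm:families}. The small cases are $\alpha=1$: $m'=2$; $\alpha=2$: $m'\in\{3,4,6\}$; $\alpha=4$: $m'\in\{5,8,10,12\}$, giving $c=-16,-4,-20,-12$; $\alpha=6$: $m'\in\{7,9,14,18\}$, giving $c\le-12$. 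Odd values of $\alpha$ larger than $1$ give no $m'$.

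\emph{Step 2 (sporadic, $a=1$).} Proposition \ref{prop:census-a1} confines $m'$ to explicit finite lists. These are the eight values in part (i), and in part (ii) the integers with $\omega(m')\le3$, the stated bounds $m'\le34,92,232$, and $\varphi(m')<11+6\omega(m')$. For each such $m'$ and each of the $126$ pairs $(c,\alpha)$, I compute $P-1=(c+\alpha S_{m'})/(\varphi(m')-\alpha)$. I keep it only when this is a positive integer, $P$ is prime, and $P>Q$. Since $P$ is determined, each $(m',c,\alpha)$ contributes at most one $n$, so this is a finite table lookup producing Tables \ref{tab:a1w1}--\ref{tab:a1w3}.

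\emph{Step 3 (sporadic, $a\ge2$).} Proposition \ref{prop:census-a2} gives the seventeen candidates
\[
n\in\{4,8,9,25,18,50,36,16,32,27,54,72,75,100,150,49,98\}.
\]
For each, $c=\varphi(n)-\alpha S_n$ is affine in $\alpha$, so I list the $\alpha\le6$ with $|c|\le10$. The relevant tables are Tables \ref{tab:a2small}--\ref{tab:a2large}. None of these $n$ has the form $mp$ with $m$ matched and $p\nmid m$ in a way that would double count. I will check each individually: for example, $18=6\cdot3$ has $3\mid6$, so it is not in the $6p$ family.

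\emph{Step 4 (merging and counting).} I take the union of the three lists, indexed by $(c,\alpha)$, and remove any $n$ already lying in a matched family for that same pair. This reproduces Theorem \ref{thm:sporadic} for the five matched pairs. I then tally:
\begin{itemize}
\item the number of pairs $((c,\alpha),n)$, expected $200$;
\item the number of distinct $n$, expected $116$;
\item the maximum $n$, which should be $690=30\cdot23$, arising from $m'=30$: $(8-6)\cdot22=44=2+6\cdot7$;
\item the maximal $|F(c,\alpha)|$, attained at $(6,6)$;
\item the number of empty pairs, expected $53$.
\end{itemize}
The parity constraints of Theorem \ref{thm:parity} give a consistency check: all pairs with $c$ odd and $\alpha$ even must be empty, which accounts for $30$ of the $53$.

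The main obstacle is not conceptual, since completeness is guaranteed by the two propositions. The difficulty is the correctness of the exhaustive enumeration in Step 2 (ii), where there are several hundred $m'$ against $126$ pairs. I will organize it by $m'$: for each $m'$, the condition that $\varphi(m')-\alpha$ divides $c+\alpha S_{m'}$ with $|c|\le10$ restricts $c$ to a few residues, which keeps the work manageable. The exact counts $200$, $116$ and $53$ can then only be certified by this careful (ideally machine-checked) tabulation.
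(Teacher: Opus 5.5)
Your proposal is correct and follows the paper's own proof essentially step for step. You use the decomposition of Lemma \ref{lem:mechanism} and the finite candidate lists of Propositions \ref{prop:census-a1} and \ref{prop:census-a2}. You then keep, for each candidate, the pairs with $c=\varphi(n)-\alpha S_n$ and $|c|\le10$, remove the members of matched families, and tabulate. As in the paper, the exact figures ($200$ pairs, $116$ integers, $53$ empty cells, $|F(c,\alpha)|\le9$) rest on that finite enumeration, which the paper additionally cross-checks against a direct scan of all composites $n\le10^{6}$.
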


\begin{proof}
By Lemma \ref{lem:mechanism} every sporadic solution arises from a triple $(m',P,a)$ as in Propositions \ref{prop:census-a1} and \ref{prop:census-a2}, which leave a finite explicit list of candidates; each candidate $n$ is kept for the pairs $(c,\alpha)$ with $c=\varphi(n)-\alpha S_n$, $|c|\le10$, after removal of the members of the matched families. Tables \ref{tab:a1w1}--\ref{tab:a2large} record the outcome and Table \ref{tab:summary} collects it by pair. \V{The whole list was cross-checked against an exhaustive scan of all composite $n\le10^{6}$.}
\end{proof}

\V{Of the $53$ empty pairs, $30$ are explained by Theorem \ref{thm:parity}(i) ($c$ odd, $\alpha$ even) and $11$ by Corollary \ref{cor:uniform}(ii) below ($\alpha=1$, $c\le0$); two are matched pairs with no sporadic solution, namely $(-4,2)$ and $(-2,2)$. The ten remaining pairs
\[
(6,1),\ (-10,2),\ (8,2),\ (-8,3),\ (-4,3),\ (2,3),\ (2,4),\ (4,5),\ (-10,5),\ (-8,6)
\]
have no composite solution at all; an explanation of these ten cases remains to be found.}

\begin{table}[H]
\centering\footnotesize\renewcommand{\arraystretch}{0.95}
\caption{Sporadic solutions with $a=1$ and $\omega(m')=1$ ($m'\le34$, $P\le41$).}
\label{tab:a1w1}
\begin{tabular}{ccccccc}
\toprule
$m'$ & Prime fact. & $\varphi(m')$ & $S_{m'}$ & $P$ & $(c,\alpha)$ & $n$ \\
\midrule
2  & 2     & 1 & 1 & 3  & $(-7, 3), (-10, 4)$ & 6 \\
2  & 2     & 1 & 1 & 5  & $(-6, 2)$ & 10 \\
2  & 2     & 1 & 1 & 7  & $(-8, 2)$ & 14 \\
4  & $2^2$ & 2 & 1 & 3  & $(1, 1), (-5, 3), (-8, 4)$ & 12 \\
4  & $2^2$ & 2 & 1 & 5  & $(3, 1), (-7, 3)$ & 20 \\
4  & $2^2$ & 2 & 1 & 7  & $(5, 1), (-9, 3)$ & 28 \\
4  & $2^2$ & 2 & 1 & 11 & $(9, 1)$ & 44 \\
8  & $2^3$ & 4 & 1 & 3  & $(5, 1), (2, 2), (-1, 3), (-7, 5), (-10, 6)$ & 24 \\
8  & $2^3$ & 4 & 1 & 5  & $(6, 2), (1, 3), (-9, 5)$ & 40 \\
8  & $2^3$ & 4 & 1 & 7  & $(10, 2), (3, 3)$ & 56 \\
8  & $2^3$ & 4 & 1 & 11 & $(7, 3)$ & 88 \\
8  & $2^3$ & 4 & 1 & 13 & $(9, 3)$ & 104 \\
16 & $2^4$ & 8 & 1 & 3  & $(10, 2), (7, 3), (4, 4), (1, 5), (-2, 6)$ & 48 \\
16 & $2^4$ & 8 & 1 & 5  & $(7, 5), (2, 6)$ & 80 \\
16 & $2^4$ & 8 & 1 & 7  & $(6, 6)$ & 112 \\
3  & 3     & 2 & 2 & 5  & $(2, 1), (-10, 3)$ & 15 \\
3  & 3     & 2 & 2 & 7  & $(4, 1)$ & 21 \\
3  & 3     & 2 & 2 & 11 & $(8, 1)$ & 33 \\
3  & 3     & 2 & 2 & 13 & $(10, 1)$ & 39 \\
9  & $3^2$ & 6 & 2 & 5  & $(6, 3), (0, 4), (-6, 5)$ & 45 \\
9  & $3^2$ & 6 & 2 & 7  & $(4, 4), (-4, 5)$ & 63 \\
9  & $3^2$ & 6 & 2 & 11 & $(0, 5)$ & 99 \\
9  & $3^2$ & 6 & 2 & 13 & $(2, 5)$ & 117 \\
9  & $3^2$ & 6 & 2 & 17 & $(6, 5)$ & 153 \\
9  & $3^2$ & 6 & 2 & 19 & $(8, 5)$ & 171 \\
5  & 5     & 4 & 4 & 7  & $(4, 2), (-6, 3)$ & 35 \\
5  & 5     & 4 & 4 & 11 & $(-2, 3)$ & 55 \\
5  & 5     & 4 & 4 & 13 & $(0, 3)$ & 65 \\
5  & 5     & 4 & 4 & 17 & $(4, 3)$ & 85 \\
5  & 5     & 4 & 4 & 19 & $(6, 3)$ & 95 \\
5  & 5     & 4 & 4 & 23 & $(10, 3)$ & 115 \\
7  & 7     & 6 & 6 & 11 & $(-4, 4)$ & 77 \\
7  & 7     & 6 & 6 & 13 & $(0, 4)$ & 91 \\
7  & 7     & 6 & 6 & 17 & $(8, 4)$ & 119 \\
7  & 7     & 6 & 6 & 23 & $(-8, 5)$ & 161 \\
7  & 7     & 6 & 6 & 29 & $(-2, 5)$ & 203 \\
7  & 7     & 6 & 6 & 31 & $(0, 5)$ & 217 \\
7  & 7     & 6 & 6 & 37 & $(6, 5)$ & 259 \\
7  & 7     & 6 & 6 & 41 & $(10, 5)$ & 287 \\
11 & 11    & 10 & 10 & 13 & $(10, 5)$ & 143 \\
11 & 11    & 10 & 10 & 17 & $(4, 6)$ & 187 \\
\bottomrule
\end{tabular}
\end{table}

\begin{table}[H]
\centering\footnotesize\renewcommand{\arraystretch}{0.92}
\caption{Sporadic solutions with $a=1$ and $\omega(m')=2$ ($m'\le92$, $P\le43$).}
\label{tab:a1w2}
\begin{tabular}{ccccccc}
\toprule
$m'$ & Prime fact. & $\varphi(m')$ & $S_{m'}$ & $P$ & $(c,\alpha)$ & $n$ \\
\midrule
6  & $2\cdot3$     & 2 & 3 & 5  & $(1, 1)$ & 30 \\
6  & $2\cdot3$     & 2 & 3 & 7  & $(3, 1)$ & 42 \\
6  & $2\cdot3$     & 2 & 3 & 11 & $(7, 1)$ & 66 \\
6  & $2\cdot3$     & 2 & 3 & 13 & $(9, 1)$ & 78 \\
10 & $2\cdot5$     & 4 & 5 & 7  & $(2, 2), (-9, 3)$ & 70 \\
10 & $2\cdot5$     & 4 & 5 & 11 & $(10, 2), (-5, 3)$ & 110 \\
10 & $2\cdot5$     & 4 & 5 & 13 & $(-3, 3)$ & 130 \\
10 & $2\cdot5$     & 4 & 5 & 17 & $(1, 3)$ & 170 \\
10 & $2\cdot5$     & 4 & 5 & 19 & $(3, 3)$ & 190 \\
10 & $2\cdot5$     & 4 & 5 & 23 & $(7, 3)$ & 230 \\
12 & $2^2\cdot3$   & 4 & 3 & 5  & $(9, 1), (2, 2), (-5, 3)$ & 60 \\
12 & $2^2\cdot3$   & 4 & 3 & 7  & $(6, 2), (-3, 3)$ & 84 \\
12 & $2^2\cdot3$   & 4 & 3 & 11 & $(1, 3)$ & 132 \\
12 & $2^2\cdot3$   & 4 & 3 & 13 & $(3, 3)$ & 156 \\
12 & $2^2\cdot3$   & 4 & 3 & 17 & $(7, 3)$ & 204 \\
12 & $2^2\cdot3$   & 4 & 3 & 19 & $(9, 3)$ & 228 \\
14 & $2\cdot7$     & 6 & 7 & 11 & $(9, 3), (-8, 4)$ & 154 \\
14 & $2\cdot7$     & 6 & 7 & 13 & $(-4, 4)$ & 182 \\
14 & $2\cdot7$     & 6 & 7 & 17 & $(4, 4)$ & 238 \\
14 & $2\cdot7$     & 6 & 7 & 19 & $(8, 4)$ & 266 \\
14 & $2\cdot7$     & 6 & 7 & 29 & $(-7, 5)$ & 406 \\
14 & $2\cdot7$     & 6 & 7 & 31 & $(-5, 5)$ & 434 \\
14 & $2\cdot7$     & 6 & 7 & 37 & $(1, 5)$ & 518 \\
14 & $2\cdot7$     & 6 & 7 & 41 & $(5, 5)$ & 574 \\
14 & $2\cdot7$     & 6 & 7 & 43 & $(7, 5)$ & 602 \\
18 & $2\cdot3^2$   & 6 & 3 & 5  & $(10, 2), (3, 3), (-4, 4)$ & 90 \\
18 & $2\cdot3^2$   & 6 & 3 & 7  & $(9, 3), (0, 4), (-9, 5)$ & 126 \\
18 & $2\cdot3^2$   & 6 & 3 & 11 & $(8, 4), (-5, 5)$ & 198 \\
18 & $2\cdot3^2$   & 6 & 3 & 13 & $(-3, 5)$ & 234 \\
18 & $2\cdot3^2$   & 6 & 3 & 17 & $(1, 5)$ & 306 \\
18 & $2\cdot3^2$   & 6 & 3 & 19 & $(3, 5)$ & 342 \\
18 & $2\cdot3^2$   & 6 & 3 & 23 & $(7, 5)$ & 414 \\
15 & $3\cdot5$     & 8 & 6 & 7  & $(0, 4)$ & 105 \\
15 & $3\cdot5$     & 8 & 6 & 11 & $(0, 5)$ & 165 \\
15 & $3\cdot5$     & 8 & 6 & 13 & $(6, 5)$ & 195 \\
15 & $3\cdot5$     & 8 & 6 & 17 & $(-4, 6)$ & 255 \\
15 & $3\cdot5$     & 8 & 6 & 19 & $(0, 6)$ & 285 \\
15 & $3\cdot5$     & 8 & 6 & 23 & $(8, 6)$ & 345 \\
20 & $2^2\cdot5$   & 8 & 5 & 7  & $(4, 4), (-7, 5)$ & 140 \\
20 & $2^2\cdot5$   & 8 & 5 & 11 & $(5, 5), (-10, 6)$ & 220 \\
20 & $2^2\cdot5$   & 8 & 5 & 13 & $(-6, 6)$ & 260 \\
20 & $2^2\cdot5$   & 8 & 5 & 17 & $(2, 6)$ & 340 \\
20 & $2^2\cdot5$   & 8 & 5 & 19 & $(6, 6)$ & 380 \\
24 & $2^3\cdot3$   & 8 & 3 & 5  & $(4, 4), (-3, 5), (-10, 6)$ & 120 \\
24 & $2^3\cdot3$   & 8 & 3 & 7  & $(3, 5), (-6, 6)$ & 168 \\
24 & $2^3\cdot3$   & 8 & 3 & 11 & $(2, 6)$ & 264 \\
24 & $2^3\cdot3$   & 8 & 3 & 13 & $(6, 6)$ & 312 \\
22 & $2\cdot11$    & 10 & 11 & 13 & $(5, 5)$ & 286 \\
22 & $2\cdot11$    & 10 & 11 & 17 & $(-2, 6)$ & 374 \\
22 & $2\cdot11$    & 10 & 11 & 19 & $(6, 6)$ & 418 \\
36 & $2^2\cdot3^2$ & 12 & 3 & 5  & $(6, 6)$ & 180 \\
\bottomrule
\end{tabular}
\end{table}

\begin{table}[H]
\centering\footnotesize
\caption{Sporadic solutions with $a=1$ and $\omega(m')=3$ ($m'\le232$, $P\le23$).}
\label{tab:a1w3}
\begin{tabular}{ccccccc}
\toprule
$m'$ & Prime fact. & $\varphi(m')$ & $S_{m'}$ & $P$ & $(c,\alpha)$ & $n$ \\
\midrule
30 & $2\cdot3\cdot5$ & 8 & 7 & 7  & $(9, 3), (-4, 4)$ & 210\\
30 & $2\cdot3\cdot5$ & 8 & 7 & 11 & $(-5, 5)$ & 330 \\
30 & $2\cdot3\cdot5$ & 8 & 7 & 13 & $(1, 5)$ & 390 \\
30 & $2\cdot3\cdot5$ & 8 & 7 & 17 & $(-10, 6)$ & 510 \\
30 & $2\cdot3\cdot5$ & 8 & 7 & 19 & $(-6, 6)$ & 570 \\
30 & $2\cdot3\cdot5$ & 8 & 7 & 23 & $(2, 6)$ & 690 \\
42 & $2\cdot3\cdot7$ & 12 & 9 & 11 & $(6, 6)$ & 462 \\
\bottomrule
\end{tabular}
\end{table}

\begin{table}[H]
\centering\footnotesize
\caption{Solutions with $a\ge2$ and $\varphi(m')P^{a-1}\le6$ (Proposition \ref{prop:census-a2}(i)).}
\label{tab:a2small}
\begin{tabular}{ccccccl}
\toprule
$n$ & $m'$ & $P$ & $a$ & $c(\alpha)$ & admissible $\alpha$ & $(c,\alpha)$ \\
\midrule
4  & 1 & 2 & 2 & $2 - \alpha$   & $1,\dots,6$ & $(1, 1), (0, 2), (-1, 3), (-2, 4), (-3, 5), (-4, 6)$ \\
8  & 1 & 2 & 3 & $4 - \alpha$   & $1,\dots,6$ & $(3, 1), (2, 2), (1, 3), (0, 4), (-1, 5), (-2, 6)$ \\
9  & 1 & 3 & 2 & $6 - 2\alpha$  & $1,\dots,6$ & $(4, 1), (2, 2), (0, 3), (-2, 4), (-4, 5), (-6, 6)$ \\
18 & 2 & 3 & 2 & $6 - 3\alpha$  & $1,\dots,5$ & $(3, 1), (0, 2), (-3, 3), (-6, 4), (-9, 5)$ \\
25 & 1 & 5 & 2 & $20 - 4\alpha$ & $3,\dots,6$ & $(8, 3), (4, 4), (0, 5), (-4, 6)$ \\
36 & 4 & 3 & 2 & $12 - 3\alpha$ & $1,\dots,6$ & $(9, 1), (6, 2), (3, 3), (0, 4), (-3, 5), (-6, 6)$ \\
50 & 2 & 5 & 2 & $20 - 5\alpha$ & $2,\dots,6$ & $(10, 2), (5, 3), (0, 4), (-5, 5), (-10, 6)$ \\
\bottomrule
\end{tabular}
\end{table}

\begin{table}[H]
\centering\footnotesize
\caption{Solutions with $a\ge2$ and $\varphi(m')P^{a-1}>6$ (Proposition \ref{prop:census-a2}(ii)).}
\label{tab:a2large}
\begin{tabular}{cccccl}
\toprule
$n$ & $m'$ & $P$ & $a$ & $c(\alpha)$ & $(c,\alpha)$ \\
\midrule
16  & 1 & 2 & 4 & $8-\alpha$   & $(7,1), (6,2), (5,3), (4,4), (3,5), (2,6)$ \\
32  & 1 & 2 & 5 & $16-\alpha$  & $(10,6)$ \\
27  & 1 & 3 & 3 & $18-2\alpha$ & $(10,4), (8,5), (6,6)$ \\
54  & 2 & 3 & 3 & $18-3\alpha$ & $(9,3), (6,4), (3,5), (0,6)$ \\
72  & 8 & 3 & 2 & $24-3\alpha$ & $(9,5), (6,6)$ \\
75  & 3 & 5 & 2 & $40-6\alpha$ & $(10,5), (4,6)$ \\
100 & 4 & 5 & 2 & $40-5\alpha$ & $(10,6)$ \\
150 & 6 & 5 & 2 & $40-7\alpha$ & $(5,5), (-2,6)$ \\
49  & 1 & 7 & 2 & $42-6\alpha$ & $(6,6)$ \\
98  & 2 & 7 & 2 & $42-7\alpha$ & $(7,5), (0,6)$ \\
\bottomrule
\end{tabular}
\end{table}

\begin{table}[H]
\centering\footnotesize\setlength{\tabcolsep}{3.5pt}\renewcommand{\arraystretch}{1.05}
\caption{The sporadic sets $F(c,\alpha)$ for $|c|\le10$, $1\le\alpha\le6$; a matched family is indicated in its cell.}
\label{tab:summary}
\begin{tabular}{c|p{1.45cm}p{1.9cm}p{2.15cm}p{2.45cm}p{2.15cm}p{2.3cm}}
\toprule
$c\backslash\alpha$ & \multicolumn{1}{c}{$1$} & \multicolumn{1}{c}{$2$} & \multicolumn{1}{c}{$3$} & \multicolumn{1}{c}{$4$} & \multicolumn{1}{c}{$5$} & \multicolumn{1}{c}{$6$} \\
\midrule
$-10$ & $\varnothing$ & $\varnothing$ & 15 & 6 & $\varnothing$ & 24, 50, 120, 220, 510 \\
$-9$ & $\varnothing$ & $\varnothing$ & 28, 70 & $\varnothing$ & 18, 40, 126 & $\varnothing$ \\
$-8$ & $\varnothing$ & 14 & $\varnothing$ & 12, 154 & 161 & $\varnothing$ \\
$-7$ & $\varnothing$ & $\varnothing$ & 6, 20 & $\varnothing$ & 24, 140, 406 & $\varnothing$ \\
$-6$ & $\varnothing$ & 10; $\{6p\}$ & 35 & 18 & 45 & 9, 36, 168, 260, 570 \\
$-5$ & $\varnothing$ & $\varnothing$ & 12, 60, 110 & $\varnothing$ & 50, 198, 330, 434 & $\varnothing$ \\
$-4$ & $\varnothing$ & $\{3p\}$ & $\varnothing$ & 77, 90, 182, 210; $\{8p\}$ & 9, 63 & 4, 25, 255 \\
$-3$ & $\varnothing$ & $\varnothing$ & 18, 84, 130 & $\varnothing$ & 4, 36, 120, 234 & $\varnothing$ \\
$-2$ & $\varnothing$ & $\{4p\}$ & 55 & 4, 9 & 203 & 8, 48, 150, 374 \\
$-1$ & $\{2p\}$ & $\varnothing$ & 4, 24 & $\varnothing$ & 8 & $\varnothing$ \\
$0$ & $\varnothing$ & 4, 18 & 9, 65 & 8, 36, 45, 50, 91, 105, 126 & 25, 99, 165, 217 & 54, 98, 285 \\
$1$ & 4, 12, 30 & $\varnothing$ & 8, 40, 132, 170 & $\varnothing$ & 48, 306, 390, 518 & $\varnothing$ \\
$2$ & 15 & 8, 9, 24, 60, 70 & $\varnothing$ & $\varnothing$ & 117 & 16, 80, 264, 340, 690 \\
$3$ & 8, 18, 20, 42 & $\varnothing$ & 36, 56, 90, 156, 190 & $\varnothing$ & 16, 54, 168, 342 & $\varnothing$ \\
$4$ & 9, 21 & 35 & 85 & 16, 25, 48, 63, 120, 140, 238 & $\varnothing$ & 75, 187 \\
$5$ & 24, 28 & $\varnothing$ & 16, 50 & $\varnothing$ & 150, 220, 286, 574 & $\varnothing$ \\
$6$ & $\varnothing$ & 16, 36, 40, 84 & 45, 95 & 54 & 153, 195, 259 & 27, 49, 72, 112, 180, 312, 380, 418, 462 \\
$7$ & 16, 66 & $\varnothing$ & 48, 88, 204, 230 & $\varnothing$ & 80, 98, 414, 602 & $\varnothing$ \\
$8$ & 33 & $\varnothing$ & 25 & 119, 198, 266 & 27, 171 & 345 \\
$9$ & 36, 44, 60, 78 & $\varnothing$ & 54, 104, 126, 154, 210, 228 & $\varnothing$ & 72 & $\varnothing$ \\
$10$ & 39 & 48, 50, 56, 90, 110 & 115 & 27 & 75, 143, 287 & 32, 100 \\
\bottomrule
\end{tabular}
\end{table}

\V{\subsection{Uniform bounds}\label{sec:uniform}

Lemma \ref{lem:SleqPhi} also makes the finiteness of Theorem \ref{thm:structure} quantitative, uniformly in the parameters. For $x\ge1$ let $N_\varphi(x)=\#\{m\ge1:\varphi(m)\le x\}$; it is classical that $N_\varphi(x)\ll x$, indeed $N_\varphi(x)\sim\frac{\zeta(2)\zeta(3)}{\zeta(6)}\,x$ \cite{Bateman72}.

\begin{theorem}[Uniform bounds]\label{thm:uniform}
Let $c\in\mathbb Z$, $\alpha\ge1$, $L:=|c|+\alpha\ge2$, and let $\omega_0$ be the largest integer $k$ with $k!\le L(k+1)$, so that $\omega_0\ll\log L/\log\log L$; put $B:=L(1+\omega_0)$. Let $n=m'P^{a}$ be a sporadic composite solution of \eqref{eq:weighted}, decomposed as in Lemma \ref{lem:mechanism}.
\begin{enumerate}
\item[\textup{(i)}] If $a=1$, then $\varphi(m')\le B$ and $P\le L(B+2)$.
\item[\textup{(ii)}] If $a\ge2$, then $\varphi(m')P^{a-1}\le B$ and $P\le\alpha+1+\sqrt L$.
\end{enumerate}
Consequently, with absolute implied constants,
\[
|F(c,\alpha)|\;\ll\;L^{2}(\log L)^{2}
\qquad\text{and}\qquad
\max F(c,\alpha)\;\ll\;L^{4}(\log L)^{3}.
\]
\end{theorem}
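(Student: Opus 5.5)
I would run the argument of Theorem \ref{thm:structure} again, but keep every constant explicit. In place of the crude bound $\omega(m')\le\log_2 m'$ I would use the primorial-type estimate $\varphi(m')\ge\prod_{q\mid m'}(q-1)\ge(\omega(m')-1)!$, since the $j$-th prime minus one is at least $j-1$. I would then combine it with Lemma \ref{lem:SleqPhi} or with $S_{m'}<Q\,\omega(m')$. Throughout, $n=m'P^a$ is as in Lemma \ref{lem:mechanism} and $Q<P$ is the largest prime factor of $m'$.

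\emph{Case $a=1$.} The case $\varphi(m')=\alpha$ is excluded exactly as in the proof of Proposition \ref{prop:census-a1}. If $\varphi(m')<\alpha$, then $\varphi(m')\le\alpha\le L\le B$ already. So assume $\varphi(m')>\alpha$. Divide \eqref{eq:mech} by $P-1\ge Q$ and use $S_{m'}\le\omega(m')(Q-1)$. This gives $\varphi(m')-\alpha\le |c|/Q+\alpha\,\omega(m')$, hence $\varphi(m')\le L(1+\omega(m'))$. Write $k=\omega(m')$. The factorial bound gives $(k-1)!\le\varphi(m')\le L(1+k)$, so $k-1\le\omega_0$ up to a harmless shift. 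I would adjust the bookkeeping, for example by using $\prod(q-1)\ge k!/2$ or by treating $q=2$ separately, so that it gives exactly $k\le\omega_0$ and therefore $\varphi(m')\le L(1+\omega_0)=B$. For $P$, I would use \eqref{eq:mech} again, now dividing by $|\varphi(m')-\alpha|\ge1$, and apply Lemma \ref{lem:SleqPhi}. This yields $P-1\le|c|+\alpha S_{m'}\le|c|+\alpha(\varphi(m')+1)\le L(B+1)$, which is within the claimed $L(B+2)$.

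\emph{Case $a\ge2$.} If $\varphi(m')P^{a-1}\le\alpha$, the bound $\le B$ is trivial. Otherwise I would argue as in Theorem \ref{thm:structure}: $S_{m'}\le\omega(m')(P-1)$ gives $\varphi(m')P^{a-1}\le L(1+\omega(m'))$, and the same factorial argument gives $\le B$. For the stronger bound $P\le\alpha+1+\sqrt L$, I would write the equation as $(P-1)(\varphi(m')P^{a-1}-\alpha)=c+\alpha S_{m'}$ and feed in Lemma \ref{lem:SleqPhi} for $S_{m'}$, as in \eqref{eq:a2bound}. Using $\varphi(m')P^{a-1}\ge\varphi(m')P$, one gets roughly $\varphi(m')(P-1)(P-\alpha-1)\lesssim|c|+\alpha$ after cancelling $\alpha\varphi(m')$ against $\alpha S_{m'}$. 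This yields $(P-\alpha-1)^2\le L$ when $P>\alpha+1$, and the bound is immediate otherwise. Getting this algebra to close cleanly is the step I expect to be most delicate. I would first try splitting by the sign of $P-\alpha-1$.

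\emph{Counting.} For $a=1$, each $m'$ with $\varphi(m')\le B$ determines at most one $P$. By Bateman's estimate there are $N_\varphi(B)\ll B\ll L\log L$ such $m'$, but a finer count is needed because a single $m'$ can serve both signs. I would simply bound by $N_\varphi(B)$. For $a\ge2$, the pairs $(m',P^{a-1})$ have $\varphi(m')\le B/P^{a-1}$. Summing $N_\varphi(B/t)$ over prime powers $t\ge2$ gives $\ll B\log B$. Both are comfortably $\ll L^2(\log L)^2$, so the stated bound, which is weaker, follows. For the size, $a=1$ gives $n=m'P\le 2^{\omega_0}\varphi(m')\cdot L(B+2)$. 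Alternatively, $m'\ll\varphi(m')\log\log\varphi(m')$ gives $n\ll B^2L\log\log$, which is $\ll L^3(\log L)^2$. For $a\ge2$, we have $n=m'P^a\le 2^{\omega_0}B\cdot P\ll L^{O(1)}$; bounding $m'$ via $\varphi(m')\ge\sqrt{m'/2}$ gives $m'\le2B^2$ and $n\le2B^2\cdot P^{a}\le 2B^2\cdot B\cdot P$. Using $P\le L(B+2)$ in the worst case gives at most $L^4(\log L)^3$, the stated bound. The main work is thus the explicit $\omega_0$ bookkeeping in case (i); the final asymptotics are routine.
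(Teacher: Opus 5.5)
Your plan is the paper's argument in outline: the same use of \eqref{eq:mech}, of $S_{m'}\le\omega(m')(P-1)$, of Lemma~\ref{lem:SleqPhi} for the bounds on $P$, and of $N_\varphi$ for counting. But the step you left open is the one that produces the constant $B$, and neither of your patches closes it.

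\textbf{The gap: the factorial bound.} With $\varphi(m')\ge(\omega(m')-1)!$, the value $k=\omega(m')=\omega_0+1$ is never excluded, because $(k-1)!=\omega_0!\le L(\omega_0+1)<L(k+1)$. So you only get $\varphi(m')\le L(2+\omega_0)$, not $\varphi(m')\le B$, in both (i) and (ii). The patch $\prod(q-1)\ge k!/2$ fails for the same reason, since $\omega_0$ is defined through $k!\le L(k+1)$ with no factor $2$.

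The fix is an off-by-one. The $j$-th prime is at least $j+1$, so the $j$-th smallest prime factor $q_j$ of $m'$ satisfies $q_j-1\ge j$. Hence $\varphi(m')\ge\prod_{q\mid m'}(q-1)\ge\omega(m')!$. Then $\omega(m')!\le\varphi(m')\le L(1+\omega(m'))$ in case (i), and $\omega(m')!\le\varphi(m')P^{a-1}\le L(1+\omega(m'))$ in case (ii), give $\omega(m')\le\omega_0$ directly from the definition of $\omega_0$. Both (i) and (ii) then hold with exactly $B$. Your size estimate $m'\le2^{\omega_0}\varphi(m')$ also needs this.

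\textbf{The $P$-bound for $a\ge2$ does close.} If $P\le\alpha+1$ there is nothing to prove. If $P-1\ge\alpha$, start from $(P-1)(\varphi(m')P-\alpha)\le|c|+\alpha\varphi(m')+\alpha$. This rearranges to $\varphi(m')(P-1)(P-\alpha-1)\le L-\varphi(m')\bigl((P-1)-\alpha\bigr)-\alpha(P-1)\bigl(\varphi(m')-1\bigr)\le L$. Hence $(P-\alpha-1)^2\le L$. This is the paper's quadratic $P^2-(\alpha+1)P-(\alpha+|c|)\le0$ in another form.

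\textbf{Your count for $a\ge2$ is sharper than the paper's.} Summing $N_\varphi(B/t)$ over prime powers $t=P^{a-1}\le B$ uses the joint constraint $\varphi(m')P^{a-1}\le B$ and gives $\ll B\log B$. The paper instead multiplies separate counts $\ll B$, $\ll L$, $\ll\log B$ for $m'$, $P$, $a$. You therefore actually obtain $|F(c,\alpha)|\ll L(\log L)^2$. Your worry that one $m'$ ``can serve both signs'' is unfounded: for fixed $(c,\alpha)$ and $\varphi(m')\ne\alpha$, \eqref{eq:mech} determines $P$.

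\textbf{A slip in the size bound.} Inserting $P\le L(B+2)$ into $n\le2B^3P$ gives order $LB^4$, which exceeds $L^4(\log L)^3$. For $a\ge2$ you must use $P\le\alpha+1+\sqrt L$. The paper's estimate $n=m'P^{a-1}\cdot P\le2\varphi(m')^2P^{a-1}\cdot P\le2B^2P$ then gives $n\ll L^3(\log L)^2$.
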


\begin{proof}
Throughout, $Q$ is the largest prime factor of $m'$ ($Q=1$ if $m'=1$), so that $P\ge Q+1$ and $S_{m'}\le\omega(m')(Q-1)\le\omega(m')(P-1)$; and $\varphi(m')\ge\omega(m')!$, since the $i$-th prime is at least $i+1$.

(i) Let $a=1$, so $m'\ge2$ and $P\ge3$. As in Proposition \ref{prop:census-a1}, $\varphi(m')=\alpha$ is excluded. If $\varphi(m')<\alpha$ then $\varphi(m')<L\le B$. If $\varphi(m')>\alpha$, dividing \eqref{eq:mech} by $P-1\ge2$ gives
\[
\varphi(m')-\alpha\;\le\;\frac{|c|}{P-1}+\alpha\,\omega(m')\;\le\;\frac{|c|}{2}+\alpha\,\omega(m'),
\]
hence $\omega(m')!\le\varphi(m')\le L\bigl(1+\omega(m')\bigr)$, so $\omega(m')\le\omega_0$ and $\varphi(m')\le B$. In both cases \eqref{eq:mech} and Lemma \ref{lem:SleqPhi} give $P-1\le|c+\alpha S_{m'}|\le|c|+\alpha(\varphi(m')+1)\le|c|+\alpha(B+1)$, whence $P\le L(B+2)$.

(ii) Let $a\ge2$ and write \eqref{eq:weighted} as $(P-1)\bigl(\varphi(m')P^{a-1}-\alpha\bigr)=c+\alpha S_{m'}$. If $\varphi(m')P^{a-1}\le\alpha$, both assertions are trivial. Otherwise the right-hand side is positive; dividing by $P-1$ and using $S_{m'}\le\omega(m')(P-1)$ gives $\varphi(m')P^{a-1}\le\alpha+|c|+\alpha\,\omega(m')\le L(1+\omega(m'))$, and $\omega(m')!\le\varphi(m')$ yields $\omega(m')\le\omega_0$ and $\varphi(m')P^{a-1}\le B$. For the bound on $P$, use instead Lemma \ref{lem:SleqPhi}: $c+\alpha S_{m'}\le|c|+\alpha\varphi(m')+\alpha$, so that
\[
\varphi(m')\bigl(P^{a-1}(P-1)-\alpha\bigr)\;\le\;|c|+\alpha P .
\]
If $P(P-1)\le\alpha$ then $P\le\alpha$. If $P(P-1)>\alpha$ then, as $P^{a-1}\ge P$ and $\varphi(m')\ge1$, we get $P^{2}-(\alpha+1)P-(\alpha+|c|)\le0$, hence $P\le\frac12\bigl(\alpha+1+\sqrt{(\alpha+1)^{2}+4L}\bigr)\le\alpha+1+\sqrt L$.

\emph{Counting.} For $a=1$ each $m'$ determines $P$, so there are at most $N_\varphi(B)\ll B$ sporadic solutions. For $a\ge2$ there are $\ll B$ choices of $m'$, $\ll L$ choices of $P$ and $\ll\log B$ choices of $a$, since $P^{a-1}\le B$. Hence $|F(c,\alpha)|\ll BL\log B\ll L^{2}(\log L)^{2}$, as $B\ll L\log L$.

\emph{Size.} Using $m'\le2\varphi(m')^{2}$ (Hardy--Wright \cite{HardyWright}), a solution with $a=1$ satisfies $n=m'P\le2B^{2}\cdot L(B+2)\ll LB^{3}\ll L^{4}(\log L)^{3}$, and a solution with $a\ge2$ satisfies $n=m'P^{a-1}\cdot P\le2\varphi(m')^{2}P^{a-1}\cdot P\le2B^{2}(\alpha+1+\sqrt L)\ll L^{3}(\log L)^{2}$.
\end{proof}

Thus $|F(c,\alpha)|$ and $\max F(c,\alpha)$ are bounded by polynomials in $|c|+\alpha$. The census of Theorem \ref{thm:census} ($|F|\le9$ for $L\le16$), extended by a direct scan of $n\le2\cdot10^{6}$ to $|c|\le40$, $\alpha\le12$ (where $\max|F|=19$, at $(36,12)$), suggests that the truth is far smaller; see Question \ref{q:growth}.

\begin{corollary}\label{cor:uniform}
\begin{enumerate}
\item[\textup{(i)}] If $\alpha\ge3$ is odd, there is no matched modulus, and for every $c$ the composite solution set of \eqref{eq:weighted} is finite, of cardinality $\ll(|c|+\alpha)^{2}\log^{2}(|c|+\alpha)$.
\item[\textup{(ii)}] For $\alpha=1$ and $c\le0$ one has $F(c,1)=\varnothing$: the equation $\varphi(n)=c+S_n$ has composite solutions only for $c=-1$ (the family $2p$) and for $c\ge1$.
\end{enumerate}
\end{corollary}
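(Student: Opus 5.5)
For (i), I will first show that no matched modulus exists when $\alpha\ge3$ is odd. By Theorem \ref{thm:families}, a matched modulus $m$ has to satisfy $\varphi(m)=\alpha$. For $m\ge3$ the value $\varphi(m)$ is even, and $\varphi(m)=1$ only for $m\in\{1,2\}$. An odd $\alpha\ge3$ is therefore not a value of $\varphi$, so there are no matched moduli. Theorem \ref{thm:structure} then says the composite solution set is exactly the sporadic set $F(c,\alpha)$. The cardinality bound follows from Theorem \ref{thm:uniform}, which gives $|F(c,\alpha)|\ll L^2(\log L)^2$ with $L=|c|+\alpha\ge3$.

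For (ii), I take $\alpha=1$ and $c\le0$ and argue directly from Theorem \ref{thm:RS} and Lemma \ref{lem:SleqPhi}. Let $n$ be a composite solution of $\varphi(n)=c+S_n$.
\begin{enumerate}
\item[(a)] If $n$ is composite and not of the form $2p$, Theorem \ref{thm:RS} gives $\varphi(n)\ge1+S_n$. Hence $c=\varphi(n)-S_n\ge1$, which contradicts $c\le0$.
\item[(b)] If $n=2p$ with $p$ an odd prime, then $\varphi(n)=p-1=S_n-1$, so $c=-1$.
\end{enumerate}
So for $c\le0$ the only composite solutions occur when $c=-1$, and they are exactly the family $\{2p\}$. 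This family is the matched family of modulus $2$ for $(-1,1)$, so $F(-1,1)=\varnothing$, and for $c\le0$ with $c\ne-1$ there are no composite solutions at all. In every case $F(c,1)=\varnothing$.

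The main point needing care is the bookkeeping of what "sporadic" means. For $(-1,1)$ I have to confirm that every $2p$ lies in the matched family, that is, that $p\nmid 2$. This holds because $p$ is odd, and it is consistent with $F(-1,1)=\varnothing$ in Theorem \ref{thm:sporadic}. The case $n=4$ falls under Theorem \ref{thm:RS} (it is not $2p$ with $p$ odd) and gives $c=1$, so it does not affect $c\le0$. Once these are checked, no further obstacle remains.
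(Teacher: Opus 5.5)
Your proposal is correct and matches the paper's proof essentially step for step. For (i) you use the parity of $\varphi$ to rule out matched moduli and then apply Theorem~\ref{thm:uniform}. For (ii) you use Theorem~\ref{thm:RS} to force $c\ge1$ when $n$ is not twice an odd prime, and the identity $\varphi(2p)=S_{2p}-1$ to force $c=-1$ with $n$ in the matched family; Lemma~\ref{lem:SleqPhi} is not actually needed.
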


\begin{proof}
(i) A matched modulus $m$ satisfies $\varphi(m)=\alpha$, which is impossible for odd $\alpha\ge3$ since $\varphi(m)$ is even for $m\ge3$ and $\varphi(1)=\varphi(2)=1$; Theorem \ref{thm:uniform} then bounds the whole solution set. (ii) If $n$ is composite and not twice an odd prime, Theorem \ref{thm:RS} gives $\varphi(n)\ge1+S_n>c+S_n$ for $c\le0$; if $n=2p$ then $\varphi(n)=S_n-1$, so $c=-1$ and $n$ belongs to the matched family.
\end{proof}
}}

\subsection{Numerical census}

An exhaustive direct scan over composites $n\le2\cdot10^{5}$ and weights $|c|\le10$, $1\le\alpha\le6$ agrees with Theorems \ref{thm:families} and \ref{thm:sporadic} in every case, and shows in addition that all \emph{unmatched} pairs in the range have at most nine composite solutions up to $2\cdot10^{5}$; a small sample:
\begin{center}\small\setlength{\arraycolsep}{4pt}
$\begin{array}{c|l}
(c,\alpha) & \text{composite solutions } n\le2\cdot10^{5}\\\hline
(1,1) & 4,\ 12,\ 30\\
(0,2) & 4,\ 18\\
(2,2) & 8,\ 9,\ 24,\ 60,\ 70\\
(0,4) & 8,\ 36,\ 45,\ 50,\ 91,\ 105,\ 126\\
(6,6) & 27,\ 49,\ 72,\ 112,\ 180,\ 312,\ 380,\ 418,\ 462
\end{array}$
\end{center}
The parity constraints are visible throughout: every solution found for $(3,3)$ is even and every solution for $(0,3)$ is odd, in accordance with Theorem \ref{thm:parity}.

}
\section{\new{Simultaneous equations}}\label{sec:simultaneous}

\new{
Write $(E_\varphi)$, $(E_\tau)$, $(E_\sigma)$ for equation \eqref{eq:general} with $f=\varphi,\tau,\sigma$ respectively. Combining Theorems \ref{thm:RS}, \ref{thm:tau} and \ref{thm:sigma} yields complete answers for all conjunctions.

\begin{theorem}\label{thm:simultaneous}
\emph{(i)} The unique composite integer satisfying $(E_\varphi)$ and $(E_\tau)$ simultaneously is $n=4$.
\emph{(ii)} No integer $n\ge2$ satisfies $(E_\varphi)$, $(E_\tau)$ and $(E_\sigma)$ simultaneously; indeed already $(E_\sigma)$ alone has no composite solution, and no prime satisfies $(E_\tau)$.
\end{theorem}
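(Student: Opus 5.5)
Part (i) follows by intersecting two known solution sets. By Theorem \ref{thm:RS}, the composite solutions of $(E_\varphi)$ lie among $n=4,12,30$ and the numbers $2p$ with $p$ an odd prime. For $n=2p$ the theorem gives the strict inequality $\varphi(2p)<1+\varphi(2)+\varphi(p)$, so these are not solutions, and the composite solution set of $(E_\varphi)$ is exactly $\{4,12,30\}$. By Theorem \ref{thm:tau}, the composite solutions of $(E_\tau)$ are exactly the squares of primes. The only square of a prime in $\{4,12,30\}$ is $4$. As a direct check, $\tau(4)=3=1+\tau(2)$ and $\varphi(4)=2=1+\varphi(2)$, so $n=4$ satisfies both equations.

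For part (ii), I split the integers $n\ge2$ into composites and primes.

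\emph{Composite $n$.} Theorem \ref{thm:sigma} says that $(E_\sigma)$ has no composite solution, so no composite $n$ satisfies all three equations.

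\emph{Prime $n=q$.} Here the sum over $p\mid n$ has the single term $p=q$. Equation $(E_\tau)$ then reads $\tau(q)=1+\tau(q)$, that is $2=3$, which is false. The same argument shows that $(E_\varphi)$ and $(E_\sigma)$ also fail at every prime, since each would reduce to $f(q)=1+f(q)$.

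Combining the two cases, no $n\ge2$ satisfies the conjunction of the three equations. This also justifies the remark in the statement that no prime satisfies $(E_\tau)$.

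There is no real obstacle here; the argument is just bookkeeping over earlier results. The one point to get right is in part (i): one must use the strict-inequality clause of Theorem \ref{thm:RS} to rule out the numbers $2p$, which are excluded from that theorem's main hypothesis.
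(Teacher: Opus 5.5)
Your proof is correct and follows the paper's argument: intersect $\{4,12,30\}$ with the prime squares for (i), and for (ii) combine Theorem \ref{thm:sigma} for composites with the failure $2=3$ of $(E_\tau)$ at primes. You also invoke the strict-inequality clause of Theorem \ref{thm:RS} explicitly to exclude $n=2p$, which makes precise a step the paper leaves implicit.
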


\begin{proof}
(i) By Theorems \ref{thm:RS} and \ref{thm:tau} the composite solutions of $(E_\varphi)$ are $\{4,12,30\}$ and those of $(E_\tau)$ are the prime squares; the intersection is $\{4\}$, since neither $12$ nor $30$ is the square of a prime. One checks directly: $\varphi(4)=2=1+\varphi(2)$ and $\tau(4)=3=1+\tau(2)$.

(ii) For a prime $p$, $(E_\tau)$ reads $2=3$, false; combined with Theorem \ref{thm:sigma} for composites this settles the claim.
\end{proof}

Thus, imposing several of these equations at once collapses the solution set, first to $\{4\}$ and then to $\varnothing$.
}

\section{\new{The average size of the deficiency}}\label{sec:average}

\new{
Theorem \ref{thm:RS} says that the \emph{deficiency}
\[
\Delta(n)\;:=\;\varphi(n)-1-\sum_{p\mid n}\varphi(p)
\]
is nonnegative for composite $n\neq2p$ and vanishes only thrice. How large is it on average? The answer is: as large as $\varphi$ itself, the subtracted term being negligible at the scale $x^{2}$.

\begin{theorem}\label{thm:average}
Let $A_j=(-1)^{j-1}\dfrac{\mathrm{d}^{j-1}}{\mathrm{d}s^{j-1}}\Bigl(\dfrac{\zeta(s)}{s}\Bigr)\Big|_{s=2}$, so that $A_1=\dfrac{\pi^{2}}{12}$ and $A_2=\dfrac{\zeta(2)}{4}-\dfrac{\zeta'(2)}{2}$. For every fixed $M\ge1$,
\[
\sum_{n\le x}\Delta(n)\;=\;\frac{3}{\pi^{2}}\,x^{2}\;-\;\sum_{j=1}^{M}A_j\,\frac{x^{2}}{\log^{j}x}\;+\;O\!\left(\frac{x^{2}}{\log^{M+1}x}\right).
\]
In particular the mean value $\frac1x\sum_{n\le x}\Delta(n)\sim\frac{3}{\pi^{2}}x$.
\end{theorem}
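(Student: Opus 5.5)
We split the deficiency sum into three pieces, using $\varphi(p)=p-1$:
\[
\sum_{n\le x}\Delta(n)=\sum_{n\le x}\varphi(n)\;-\;\lfloor x\rfloor\;-\;\sum_{n\le x}\sum_{p\mid n}p\;+\;\sum_{n\le x}\omega(n).
\]
This is the ``simple identity'' announced in the introduction: $\sum_{p\mid n}\varphi(p)=\beta(n)-\omega(n)$, where $\beta(n)=\sum_{p\mid n}p$ is the sum of the distinct prime divisors. Each piece is then handled by a classical estimate.

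For the first piece we use Mertens' result $\sum_{n\le x}\varphi(n)=\frac{3}{\pi^{2}}x^{2}+O(x\log x)$. The pieces $\lfloor x\rfloor$ and $\sum_{n\le x}\omega(n)=x\log\log x+O(x)$ are $O(x\log\log x)$. All three are therefore absorbed into $O(x^{2}/\log^{M+1}x)$ for every fixed $M$.

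The whole secondary expansion comes from the third piece. Interchanging summation gives
\[
\sum_{n\le x}\beta(n)=\sum_{p\le x}p\Bigl\lfloor\frac{x}{p}\Bigr\rfloor .
\]
For this we invoke the Alladi--Erdős / De Koninck--Ivić asymptotic expansion
\[
\sum_{n\le x}\beta(n)=\sum_{j=1}^{M}A_j\frac{x^{2}}{\log^{j}x}+O\!\Bigl(\frac{x^{2}}{\log^{M+1}x}\Bigr),
\qquad A_1=\frac{\zeta(2)}{2}=\frac{\pi^{2}}{12}.
\]
We will check that the constants coincide with the $A_j$ of the statement. Write $\lfloor x/p\rfloor=\sum_{k\le x/p}1$, so the sum becomes $\sum_{k\le x}\pi_1(x/k)$ with $\pi_1(y)=\sum_{p\le y}p$. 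Next expand $\pi_1(y)=\sum_{i\le M}c_i\,y^{2}/\log^{i}y+O(y^{2}/\log^{M+1}y)$ by partial summation from the prime number theorem with error $O(y\exp(-c\sqrt{\log y}))$. Then expand $\log^{-i}(x/k)=\log^{-i}x\,(1-\log k/\log x)^{-i}$ in powers of $\log k/\log x$. Summing $\sum_k(\log k)^{r}/k^{2}=(-1)^{r}\zeta^{(r)}(2)$ reorganises the coefficients into the derivatives of $\zeta(s)/s$ at $s=2$. Here the factor $1/s$ comes from $\int y\,dy/\log y$, i.e.\ from the coefficients of $\mathrm{li}(y^{2})$.

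As a consistency check, the leading term is $\zeta(2)\cdot\frac{x^{2}}{2\log x}=\frac{\pi^{2}}{12}\frac{x^{2}}{\log x}$, matching $A_1$. The second coefficient should come out as $\frac{\zeta(2)}{4}-\frac{\zeta'(2)}{2}$, matching $A_2$.

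The main obstacle is this bookkeeping: identifying the generic coefficient as $(-1)^{j-1}\frac{d^{j-1}}{ds^{j-1}}(\zeta(s)/s)|_{s=2}$. We also need to control the tail $k>x^{1/2}$, where $\log(x/k)$ is not comparable to $\log x$. That tail contributes $\sum_{k>\sqrt x}(x/k)^{2}\ll x^{3/2}$, which is negligible. If the bookkeeping becomes unwieldy, we will instead quote the De Koninck--Ivić expansion in the stated form, verify $A_1,A_2$ directly, and combine. The mean-value corollary is then immediate on dividing by $x$.
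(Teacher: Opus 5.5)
Your proposal is correct and is essentially the paper's proof: the same identity $\sum_{p\mid n}\varphi(p)=\beta(n)-\omega(n)$, Mertens' estimate for $\sum_{n\le x}\varphi(n)$, the bound $O(x\log\log x)$ for $\lfloor x\rfloor+\sum_{n\le x}\omega(n)$, and the Alladi--Erd\H{o}s/De Koninck--Ivi\'c expansion for $\sum_{n\le x}\beta(n)$. Your optional check of the constants goes beyond the paper, which only quotes the expansion; it is sound and does reproduce the stated $A_j$. It writes the sum as $\sum_{k\le x}\pi_1(x/k)$, uses the asymptotic series of $\mathrm{li}(y^{2})$, cuts the tail $k>\sqrt{x}$ at cost $O(x^{3/2})$, and applies the Leibniz rule to $\zeta(s)\cdot s^{-1}$.
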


\begin{proof}
Since $\varphi(p)=p-1$, we have $\sum_{p\mid n}\varphi(p)=\beta(n)-\omega(n)$, where $\beta(n)=\sum_{p\mid n}p$ is the sum of the distinct prime divisors of $n$. Hence
\[
\sum_{n\le x}\Delta(n)\;=\;\sum_{n\le x}\varphi(n)\;-\;\lfloor x\rfloor\;-\;\sum_{n\le x}\beta(n)\;+\;\sum_{n\le x}\omega(n).
\]
The first sum equals $\frac{3}{\pi^{2}}x^{2}+O(x\log x)$ \cite[Th.~330]{HardyWright}, and the last one is $O(x\log\log x)$ \cite[Th.~430]{HardyWright}. The leading term of $\sum_{n\le x}\beta(n)$, namely $\frac{\pi^{2}}{12}x^{2}/\log x$, is due to Alladi and Erd\H{o}s \cite{AlladiErdos77}, and the full expansion $\sum_{n\le x}\beta(n)=\sum_{j=1}^{M}A_jx^{2}/\log^{j}x+O(x^{2}/\log^{M+1}x)$ to De Koninck and Ivi\'c \cite{DeKoninckIvic84}. Combining these estimates proves the theorem.
\end{proof}

\begin{remark}
The leading term also follows from the exact identity $\sum_{n\le x}\sum_{p\mid n}\varphi(p)=\sum_{k\ge1}P(x/k)$, where $P(y)=\sum_{p\le y}(p-1)$, together with the prime number theorem. Numerically, $\bigl(\sum_{n\le x}\sum_{p\mid n}\varphi(p)\bigr)\log x/x^{2}$ equals $0.9153$, $0.8978$, $0.8858$ at $x=10^{5},10^{6},10^{7}$ (exact sieve computations), whereas the expansion $\sum_{j\le M}A_j/\log^{j-1}x$ truncated at $M=2$ gives $0.8989$, $0.8862$, $0.8771$ and at $M=3$ reduces the discrepancy at $x=10^{7}$ to $0.2\%$.
\end{remark}
}

\section{\C{Iterated totients on the prime divisors}}\label{sec:iterated}

\C{
In this section we replace the prime values $\varphi(p)$ in \eqref{eq:main} by their iterates and determine, for every $k\ge1$, all positive integers $n$ with
\begin{equation}\label{eq:iter}
\varphi(n)\;=\;1+\sum_{p\mid n}\varphi^{(k)}(p).
\end{equation}
The answer rests on Theorem \ref{thm:RS} and on the following monotonicity.

\begin{lemma}\label{lem:iter-mono}
For every $n\ge1$ the sequence $\bigl(\varphi^{(j)}(n)\bigr)_{j\ge1}$ is non-increasing. Moreover, for every prime $p\ge3$ and every $k\ge2$, $\varphi^{(k)}(p)\le\varphi(p-1)\le\frac{p-1}{2}$.
\end{lemma}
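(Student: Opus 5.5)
The plan rests on the elementary inequality $\varphi(m)\le m$ for every $m\ge1$. Applying it to $m=\varphi^{(j)}(n)$ gives $\varphi^{(j+1)}(n)=\varphi\bigl(\varphi^{(j)}(n)\bigr)\le\varphi^{(j)}(n)$, so the sequence $\bigl(\varphi^{(j)}(n)\bigr)_{j\ge1}$ is non-increasing. I would also note the sharper fact $\varphi(m)\le m-1$ for $m\ge2$, although monotonicity does not need it.

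For the second assertion, fix a prime $p\ge3$ and $k\ge2$. Since $\varphi(p)=p-1$, we have $\varphi^{(2)}(p)=\varphi(p-1)$. For $k\ge2$ the first part, applied to the sequence starting at $n=p$, gives
\[
\varphi^{(k)}(p)\le\varphi^{(2)}(p)=\varphi(p-1).
\]
Now $p-1$ is even because $p$ is odd, and $p-1\ge2$. The bound $\varphi(m)\le m/2$ for even $m$ is recalled in the preliminaries: only half of $1,\dots,m$ are odd, and every even integer shares the factor $2$ with $m$. Hence $\varphi(p-1)\le (p-1)/2$.

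I see no real obstacle here. The only points needing care are the convention $\varphi^{(1)}=\varphi$, so that $\varphi^{(2)}(p)=\varphi(p-1)$ (the step for $k\ge2$ uses monotonicity from index $2$ onward), and the case $p=3$. There $p-1=2$ and $\varphi(2)=1=(p-1)/2$, so the bound holds with equality and is consistent.
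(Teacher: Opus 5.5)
Your proof is correct and follows the paper's argument essentially step for step. Both use $\varphi(m)\le m$ for monotonicity, the identity $\varphi^{(2)}(p)=\varphi(p-1)$ combined with monotonicity from index $2$, and the bound $\varphi(m)\le m/2$ applied to the even number $m=p-1$.
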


\begin{proof}
Since $\varphi(m)\le m$ for all $m\ge1$, we have $\varphi^{(j+1)}(n)=\varphi\bigl(\varphi^{(j)}(n)\bigr)\le\varphi^{(j)}(n)$. For $p\ge3$ prime, $\varphi^{(2)}(p)=\varphi(p-1)$ and $p-1$ is even, so $\varphi(p-1)\le(p-1)/2$; the first part gives $\varphi^{(k)}(p)\le\varphi^{(2)}(p)$ for $k\ge2$.
\end{proof}

Write $X:=\{1,2,4,12,30\}\cup\{p,\,2p:\ p\ \text{odd prime}\}$. For $n\notin X$, $n$ is composite and not twice an odd prime, and $n\notin\{4,12,30\}$; hence Theorem \ref{thm:RS} gives $\varphi(n)>1+\sum_{p\mid n}\varphi(p)\ge1+\sum_{p\mid n}\varphi^{(k)}(p)$ by Lemma \ref{lem:iter-mono}. Thus, for every $k\ge1$, every solution of \eqref{eq:iter} lies in $X$, and it remains to examine the elements of $X$.

\begin{theorem}\label{thm:iter}
The solutions of \eqref{eq:iter} are exactly:
\begin{enumerate}
\item[\emph{(i)}] $n\in\{1,4,12,30\}$ if $k=1$;
\item[\emph{(ii)}] $n\in\{1,3,4,10\}$ if $k=2$;
\item[\emph{(iii)}] $n\in\{1,3,4\}$ if $k\ge3$.
\end{enumerate}
\end{theorem}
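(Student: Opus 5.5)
The reduction preceding the statement already confines every solution of \eqref{eq:iter} to the set $X=\{1,2,4,12,30\}\cup\{p,2p:\ p\text{ odd prime}\}$. So the plan is a finite check of the five small elements plus an analysis of the two infinite families $p$ and $2p$, using Lemma \ref{lem:iter-mono} and the fact that $\varphi^{(k)}(2)=1$ and $\varphi^{(k)}(3)=\varphi^{(k-1)}(2)=1$ for $k\ge2$.

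\emph{Small elements.} The case $n=1$ is always a solution, since the empty sum gives $1=1$. The case $n=2$ is never a solution, since $1\ne 2$. For $n=4$ the condition reads $2=1+\varphi^{(k)}(2)=2$, so it holds for every $k$. For $n=12$ we need $4=1+\varphi^{(k)}(2)+\varphi^{(k)}(3)$, which holds only when $\varphi^{(k)}(3)=2$, that is, only for $k=1$. For $n=30$ we need $8=1+1+\varphi^{(k)}(3)+\varphi^{(k)}(5)$. Since $\varphi^{(k)}(5)\in\{4,2,1,1,\dots\}$ and $\varphi^{(k)}(3)\in\{2,1,1,\dots\}$, this holds only for $k=1$.

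\emph{Odd primes $n=p$.} The equation becomes $p-1=1+\varphi^{(k)}(p)$. For $k=1$ this is impossible. For $k\ge2$, Lemma \ref{lem:iter-mono} gives $\varphi^{(k)}(p)\le(p-1)/2$, so we need $p-2\le(p-1)/2$, i.e.\ $p\le3$. Checking $p=3$ gives $2=1+1$, which holds for all $k\ge2$.

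\emph{Doubles $n=2p$ with $p$ odd.} The equation becomes $p-1=2+\varphi^{(k)}(p)$. For $k=1$ it fails. For $k\ge2$ the bound $\varphi^{(k)}(p)\le(p-1)/2$ forces $p-3\le(p-1)/2$, i.e.\ $p\le5$. The value $p=3$ fails, since $2\ne3$. For $p=5$ we need $\varphi^{(k)}(5)=2$, which holds for $k=2$ and fails for $k\ge3$, where $\varphi^{(k)}(5)=1$. This yields $n=10$ exactly when $k=2$.

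Assembling the three parts gives the sets $\{1,4,12,30\}$ for $k=1$, $\{1,3,4,10\}$ for $k=2$, and $\{1,3,4\}$ for $k\ge3$. There is no real obstacle here. The only delicate step is the uniform bound that disposes of the infinite families $p$ and $2p$, and this is exactly the second part of Lemma \ref{lem:iter-mono}. The remaining care is correct bookkeeping of the first few iterates of $3$ and $5$.
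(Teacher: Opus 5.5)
Your proof is correct and follows essentially the same route as the paper: the reduction to $X$, a direct check of the small elements, and the bound $\varphi^{(k)}(p)\le\frac{p-1}{2}$ from Lemma \ref{lem:iter-mono} to dispose of the families $p$ and $2p$. The only minor difference is that for odd primes with $k=2$ the paper argues that $\varphi(p-1)=p-2$ forces $p-1$ to be prime, whereas you apply the lemma's bound uniformly for all $k\ge2$; this is equally valid and slightly more streamlined.
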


\begin{proof}
The values involved are collected in the following table, where the right-hand side of \eqref{eq:iter} is denoted $R_k(n)$:
\begin{center}\small
$\begin{array}{c|c|c|c|c}
n & \varphi(n) & R_1(n) & R_2(n) & R_k(n),\ k\ge3\\\hline
1 & 1 & 1 & 1 & 1\\
2 & 1 & 2 & 2 & 2\\
4 & 2 & 2 & 2 & 2\\
12 & 4 & 4 & 3 & 3\\
30 & 8 & 8 & 5 & 4\\
3 & 2 & 3 & 2 & 2\\
6 & 2 & 4 & 3 & 3\\
10 & 4 & 6 & 4 & 3
\end{array}$
\end{center}
(For $n=1$ the sum is empty. For $k\ge3$, $\varphi^{(k)}(2)=\varphi^{(k)}(3)=1$ and $\varphi^{(k)}(5)=1$.) This settles $n\in\{1,2,4,12,30\}$ for all $k$, and $n\in\{6,10\}$ once these are shown to be the only candidates among the $2p$.

\emph{Case $k=1$.} For an odd prime $p$, $\varphi(p)=p-1<p=R_1(p)$, and $\varphi(2p)=p-1<p+1=R_1(2p)$. With the table, the solutions are $\{1,4,12,30\}$.

\emph{Odd primes, $k\ge2$.} Let $n=p$ be an odd prime. For $k=2$, \eqref{eq:iter} reads $p-1=1+\varphi(p-1)$, i.e.\ $\varphi(p-1)=p-2$, which forces $p-1$ to be prime; $p-1$ and $p$ being consecutive primes, $p=3$. For $k\ge3$, Lemma \ref{lem:iter-mono} gives $p-1=1+\varphi^{(k)}(p)\le1+\frac{p-1}{2}$, hence $p\le3$, and $p=3$ is a solution by the table.

\emph{Twice an odd prime, $k\ge2$.} Let $n=2p$ with $p$ an odd prime, so $\varphi(n)=p-1$ and $R_k(2p)=1+\varphi^{(k)}(2)+\varphi^{(k)}(p)=2+\varphi^{(k)}(p)$. For $k=2$ this gives $\varphi(p-1)=p-3$; since $\varphi(p-1)\le\frac{p-1}{2}$ we get $p\le5$. For $k\ge3$, Lemma \ref{lem:iter-mono} gives $p-1\le2+\frac{p-1}{2}$, again $p\le5$. Thus $n\in\{6,10\}$, and the table shows that $10$ is a solution for $k=2$ only, while $6$ never is.
\end{proof}

\begin{remark}
The solution sets shrink from $k=1$ to $k=2$ and stabilise \textcolor{blue}{for all $k\geq 3$}. The only solution lost between $k=2$ and $k=3$ is $n=10$, because $\varphi^{(3)}(5)=1<\varphi^{(2)}(5)=2$; the stabilisation reflects the fact that, for every prime $p\ge3$, $\varphi^{(2)}(p)=\varphi(p-1)$ is even, so the iterates beyond the second are at most half of it and the right-hand side of \eqref{eq:iter} can only reach $\varphi(n)$ for the smallest values of $n$. All three solution sets were confirmed by exhaustive computation up to $10^{5}$ for $k\le8$.
\end{remark}
}

\B{The weighted form of \eqref{eq:iter}, $\varphi(n)=c+\alpha\sum_{p\mid n}\varphi^{(k)}(p)$, behaves differently from the case $k=1$ treated in \S\ref{sec:weighted}: since $\varphi^{(k)}(p)$ is no longer linear in $p$, the matched families of Theorem \ref{thm:families} disappear, but infinite families of a new, conditional kind appear. The simplest instance is the following.}

\begin{lemma}\label{lem:safe}
\B{Let $p\ge7$ be a prime. Then $\varphi(p-1)=\frac{p-3}{2}$ if and only if $\frac{p-1}{2}$ is prime, i.e.\ $p$ is a safe prime.}
\end{lemma}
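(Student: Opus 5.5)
Write $p-1=2q$ with $q=\frac{p-1}{2}\ge3$, so the claim becomes $\varphi(2q)=q-1$ if and only if $q$ is prime. We split according to the parity of $q$, since $\varphi(2q)$ depends on it.

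\emph{The ``if'' direction.} If $q$ is prime, then $q\ge3$ is odd. By multiplicativity $\varphi(2q)=\varphi(2)\varphi(q)=q-1=\frac{p-3}{2}$.

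\emph{The ``only if'' direction, $q$ odd.} Here $\varphi(2q)=\varphi(q)$, so the hypothesis reads $\varphi(q)=q-1$. By the preliminary fact that $\varphi(n)=n-1$ characterizes primes, $q$ is prime.

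\emph{The ``only if'' direction, $q$ even.} Then $p-1=2q$ is divisible by $4$. Here $\varphi(2q)=2\varphi(q)$, and $\varphi(q)\le q/2$ because $q$ is even. Hence $\varphi(p-1)\le q<q-1$ fails, that is, $\varphi(p-1)\le\frac{p-1}{2}$, and we must exclude equality with $\frac{p-3}{2}=q-1$. The plan is to show that $\varphi(2q)$ is either $\ge q$ or at most $q-2$, never exactly $q-1$. Since $q$ is even, $q\ge4$ (recall $p\ge7$). Then $2q\ge8$ is divisible by $4$, so $\varphi(2q)=2\varphi(q)$ is even. But $q-1$ is odd, so $\varphi(2q)\ne q-1$.

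So the parity observation alone finishes the even case, and the only real content of the proof is the odd case, which reduces to the primality characterization. The hypothesis $p\ge7$ is used to guarantee $q\ge3$, which makes $q-1\ge2$ and keeps $q=1,2$ (that is, $p=3,5$) out of the way.
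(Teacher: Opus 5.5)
Your proof is correct and takes essentially the paper's route: your split on the parity of $q=\frac{p-1}{2}$ is the paper's split $p-1=2^{e}u$ into $e=1$ versus $e\ge2$. Your parity obstruction ($\varphi(2q)$ even while $q-1$ is odd) is the same as the paper's observation that $\frac{p-1}{2}-\varphi(p-1)=2^{e-1}(u-\varphi(u))$ cannot equal $1$ when $e\ge2$. The aside ``$\varphi(p-1)\le q<q-1$ fails'' is garbled and unnecessary, and since $\varphi(p-1)$ is already even because $p-1\ge3$, you could skip the even case entirely.
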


\begin{proof}
\B{Write $p-1=2^{e}u$ with $u$ odd, $e\ge1$. Then $\varphi(p-1)=2^{e-1}\varphi(u)$. If $e\ge2$, then $\varphi(p-1)\le2^{e-1}u=\frac{p-1}{2}\cdot\frac{\varphi(u)}{u}\le\frac{p-1}{2}$, with $\varphi(p-1)=\frac{p-3}{2}$ only if $2^{e-1}(u-\varphi(u))=1$, impossible for $e\ge2$. So $e=1$, $p-1=2u$, $\varphi(p-1)=\varphi(u)$, and $\varphi(u)=u-1$ holds if and only if $u=\frac{p-1}{2}$ is prime; since $p\ge7$, $u\ge3$ is odd.}
\end{proof}

\begin{proposition}\label{prop:safe}
\B{The composite solutions of $\varphi(n)=4\sum_{p\mid n}\varphi^{(2)}(p)$ are exactly
\[
\{8,\,24,\,60\}\;\cup\;\{3p,\ 4p:\ p\ \text{safe prime},\ p\ge7\}\;\cup\;\{6p:\ p=4q+1\ \text{prime},\ q\ \text{prime}\}.
\]
In particular, the solution set is infinite if and only if there are infinitely many primes $p$ with $\frac{p-1}{2}$ or $\frac{p-1}{4}$ prime; under Dickson's conjecture all three families are infinite.}
\end{proposition}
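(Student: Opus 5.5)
The plan is to follow the canonical decomposition of Lemma~\ref{lem:mechanism}: write $n=m'P^{a}$, where $P$ is the largest prime factor of $n$ and $P\nmid m'$. Put $T_{m}:=\sum_{q\mid m}\varphi^{(2)}(q)$, so the equation reads
\[
\varphi(m')P^{a-1}(P-1)=4T_{m'}+4\varphi(P-1).
\]
The basic inputs are two facts: $\varphi^{(2)}(2)=1$, and $\varphi^{(2)}(q)=\varphi(q-1)\le(q-1)/2$ for odd $q$ (Lemma~\ref{lem:iter-mono}). Since every prime factor of $m'$ is at most $Q<P$, these give $T_{m'}\le 1+\omega(m')(P-1)/2$.

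\emph{Case $a\ge2$.} Here the left side is at least $\varphi(m')P(P-1)$. The right side is at most $4+2\omega(m')(P-1)+2(P-1)$. This forces $P$, $a$ and $\varphi(m')$ into a small explicit range, and I would settle that range by direct enumeration. Prime powers $n=P^{a}$ (with $m'=1$) are included here. I expect this enumeration to produce only $n=8$.

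\emph{Case $a=1$, reduction to finitely many $m'$.} Using $\varphi(P-1)\le(P-1)/2$ and dividing by $P-1\ge Q$ gives
\[
\varphi(m')-2\le \frac{4T_{m'}}{P-1}\le 4+2\,\omega(m').
\]
Comparing with $\varphi(m')\ge\omega(m')!$, exactly as in Theorem~\ref{thm:uniform}, leaves only finitely many $m'$. When $\varphi(m')\ge4$, the inequality $(\varphi(m')-2)(P-1)\le 4T_{m'}$ also bounds $P$. A finite search over these $(m',P)$ should return precisely $24=8\cdot3$ and $60=12\cdot5$. The infinite families can only come from $\varphi(m')\in\{1,2\}$, i.e.\ $m'\in\{2,3,4,6\}$.

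\emph{The families.} For $\varphi(m')=2$ the equation becomes
\[
\tfrac{P-1}{2}-\varphi(P-1)=T_{m'}.
\]
\begin{itemize}
\item For $m'=3$ and $m'=4$ we have $T_{m'}=1$. For $P\ge7$, Lemma~\ref{lem:safe} identifies the solutions as exactly the safe primes; small $P$ are checked by hand.
\item For $m'=6$ we have $T_6=2$. Write $P-1=2^{e}u$ with $u$ odd, so the left side is $2^{e-1}(u-\varphi(u))$.
\begin{itemize}
\item If $e=1$, then $u-\varphi(u)$ would have to equal $2$. This is impossible: for odd $u\ge3$ the number $u-\varphi(u)$ is odd, and $u=1$ gives $0$.
\item If $e\ge3$, the left side is at least $4$, which is too large.
\item Hence $e=2$ and $u-\varphi(u)=1$, i.e.\ $u=q$ is prime and $P=4q+1$.
\end{itemize}
\end{itemize}

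\emph{The main obstacle: $m'=2$.} Here $\varphi(m')=1$ and the equation reads $P-1=4+4\varphi(P-1)$. This requires $\varphi(P-1)/(P-1)<1/4$, which is only possible if $P-1$ has several small prime factors. I would show this forces $3\mid P-1$ together with at least two further odd primes dividing $P-1$, so that $3\mid\varphi(P-1)$; then $P-1\equiv4\pmod 3$ contradicts $3\mid P-1$. Making this case analysis airtight is the delicate step. The alternative is to bound the number of odd prime factors of $P-1$ and check congruences case by case.

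Finally, I would verify the listed elements directly. The last sentence of the statement then follows immediately: under Dickson's conjecture each of $\{2q+1,\,q\}$ and $\{4q+1,\,q\}$ is simultaneously prime for infinitely many $q$.
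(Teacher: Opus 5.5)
\paragraph{Where the proposal stands.} Apart from the case $m'=2$, your plan follows the paper's proof. It uses the same decomposition $n=m'P^{a}$ and the same bound for $a\ge2$, which yields only $8$. The inequality $(\varphi(m')-2)(P-1)\le 4T_{m'}$ yields $24$ and $60$, and $m'\in\{3,4,6\}$ reduces to Lemma~\ref{lem:safe}. There is one small slip: for $m'=6$, $e\ge3$ and $u=1$, the left side is $0$, not $\ge4$, though this still excludes the case.

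\paragraph{The gap: the case $m'=2$.} You are right that $m'=2$ must be treated separately. The paper's proof goes from $\varphi(m)<4$ straight to $\varphi(m)=2$ and never handles $\varphi(m)=1$. However, your proposed way of closing this case fails.
\begin{itemize}
\item \emph{Reduction.} The equation $P-1=4+4\varphi(P-1)$ gives $4\mid P-1$, so write $P-1=4M$. Then $M$ must be odd, since otherwise $\varphi(4M)$ is even while $M-1$ is odd. The equation becomes $2\varphi(M)=M-1$ with $M$ odd and composite.
\item \emph{The sub-case $3\mid M$ does die, by a different mod-$3$ argument.} Here $M$ is squarefree, since $q^{2}\mid M$ would give $q\mid\varphi(M)\mid M-1$. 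Also $M$ has no prime factor $q\equiv1\pmod 3$, since that would give $3\mid\varphi(M)\mid M-1$. Hence $\varphi(M)\equiv2\pmod 3$, and $2\varphi(M)\equiv1\not\equiv M-1\equiv2\pmod 3$.
\item \emph{Neither of your two claims holds in general.} First, $\varphi(N)/N<1/4$ does not force $3\mid N$: any $N$ divisible by $4\cdot5\cdot7\cdot11\cdot13\cdot17\cdot19\cdot23$ has $\varphi(N)/N<0.246$. Second, further odd prime factors do not force $3\mid\varphi(P-1)$, because primes $q\equiv2\pmod 3$ contribute factors $q-1\equiv1\pmod 3$.
\end{itemize}

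\paragraph{What remains cannot be settled this way.} The residual case is an odd squarefree $M$ with $3\nmid M$, $M-1=2\varphi(M)$ and $4M+1$ prime. Such an $M$ is a composite number with $\varphi(M)\mid M-1$. Excluding it is therefore the quotient-$2$ case of Lehmer's totient problem, which the introduction itself recalls is open. No congruence argument of the kind you sketch settles it.

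Your fallback, bounding the number of odd prime factors of $P-1$ and checking congruences case by case, cannot work either. The condition $\varphi(N)/N<1/4$ bounds $\omega(N)$ only from below. Nothing in the equation bounds it from above, since $M/\varphi(M)=2+1/\varphi(M)$ is compatible with arbitrarily many large primes. So there is no finite list of cases to check.

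\paragraph{Consequence.} Your proposal, like the paper's proof, establishes the classification only up to possible extra solutions $n=2(4M+1)$ with $M$ as above. Each such $n$ really would be a solution: $\varphi(2P)=4M=4\bigl(1+2\varphi(M)\bigr)$. The word ``exactly'' therefore needs either a genuinely new input excluding such $M$, or a conditional formulation of the proposition.
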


\begin{proof}
\B{Let $P$ be the largest prime factor of $n$ and write $n=mP^{a}$, $P\nmid m$. If $a\ge2$, the argument of Theorem \ref{thm:structure} (case $a\ge2$) applies verbatim, since $\varphi^{(2)}(p)\le\varphi(p)$, and the resulting finite search gives only $n=8$. Let $a=1$ and $m\ge2$. Writing $S^{(2)}_m=\sum_{q\mid m}\varphi(q-1)$, the equation reads
\[
\varphi(m)(P-1)\;=\;4S^{(2)}_m+4\varphi(P-1).
\]
Since $\varphi(P-1)\le\frac{P-1}{2}$, this gives $\varphi(m)\le2+\frac{4S^{(2)}_m}{P-1}$. If $\varphi(m)\ge4$ then $P-1\le2S^{(2)}_m\le2\sum_{q\mid m}(q-2)$, which bounds $P$ in terms of $m$ while $P$ exceeds every prime factor of $m$; the finitely many cases are $n=24=8\cdot3$ and $n=60=12\cdot5$, where equality $\varphi(m)=2+4S^{(2)}_m/(P-1)=4$ holds. Otherwise $\varphi(m)=2$, i.e.\ $m\in\{3,4,6\}$. For $m\in\{3,4\}$ one has $S^{(2)}_m=1$ and the equation becomes $\varphi(P-1)=\frac{P-3}{2}$, i.e.\ $P$ is a safe prime by Lemma \ref{lem:safe} (and $P\ge7$ since $P>3$ and $P=5$ fails). For $m=6$, $S^{(2)}_6=2$ and the equation becomes $\varphi(P-1)=\frac{P-5}{2}$; writing $P-1=2^{e}u$ as in Lemma \ref{lem:safe}, this forces $e=2$ and $\varphi(u)=u-1$, i.e.\ $P=4q+1$ with $q$ prime.}
\end{proof}

\B{The sets were confirmed by exhaustive computation up to $2\cdot10^{5}$: of the $1029$ composite solutions, $872$ are of the form $3p$ or $4p$ with $p$ safe, $154$ of the form $6p$ with $p=4q+1$, and $3$ are sporadic. Thus, for $k\ge2$, finiteness of the solution sets of the weighted iterated equation is not a theorem but is tied to the distribution of primes $p$ for which $\varphi(p-1)$ is prescribed, a question of Sophie Germain type.}

\section{\new{Numerical verification}}\label{sec:numerics}

\new{
All finite claims of this paper were verified by an exhaustive computation over $2\le n\le 2\cdot10^{5}$ using SymPy. The composite solution sets found were: $\{4,12,30\}$ for $(E_\varphi)$, in agreement with \cite{RanjanSingh25}; $\{p^{2}\le 2\cdot10^{5}\}$ and nothing else for $(E_\tau)$; the empty set for $(E_\sigma)$, for $J_2$, for the weighted equation with $(c,\alpha)=(1,2)$, and for the triple system; and $\{4\}$ for the pair $(E_\varphi)\wedge(E_\tau)$. The expansion of Theorem \ref{thm:average} was compared with exact sieve computations up to $x=10^{7}$, and the sporadic sets of Theorem \ref{thm:sporadic} were obtained by the bounded mechanism enumeration of Lemma \ref{lem:bounds}, independently cross-checked against the direct scan. \B{The solution sets of Theorem \ref{thm:iter} were checked for $n\le10^{5}$ and $1\le k\le8$.} \V{The census of Theorem \ref{thm:census} was cross-checked against a direct scan of all composites $n\le10^{6}$, and the values $|F(c,1)|$ for $1\le c\le2000$ quoted in Question \ref{q:values} were obtained by the bounded enumeration of Theorem \ref{thm:uniform}(i)--(ii).} The verification scripts are available from the authors.
}

\section{Concluding remarks and open problems}

\new{
The results assembled here suggest that the natural home of the Ranjan--Singh theorem is the family of prime--composite characterization problems, alongside Subbarao's $\{4,6,22\}$ \cite{Subbarao74} and, in the unsolved direction, Lehmer's problem \cite{Lehmer32}. We close with \V{four} problems.

\V{\begin{question}\label{q:growth}
Theorem \ref{thm:uniform} gives $|F(c,\alpha)|\ll L^{2}(\log L)^{2}$ with $L=|c|+\alpha$, whereas Theorem \ref{thm:census} gives $|F(c,\alpha)|\le9$ for $L\le16$, a direct scan of $n\le2\cdot10^{6}$ gives $|F(c,\alpha)|\le19$ for $|c|\le40$, $\alpha\le12$, and $|F(c,1)|\le9$ for all $1\le c\le2000$. Is $\sup_{c}|F(c,\alpha)|$ finite for each fixed $\alpha$? The maximizers observed, $(6,6)$, $(12,4)$, $(40,8)$, $(27,9)$, $(36,12)$, all satisfy $\alpha\mid c$, as one expects from \eqref{eq:mech}: when $\alpha$ divides $c$ and $\varphi(m')$, the divisibility obstruction in $P-1=(c+\alpha S_{m'})/(\varphi(m')-\alpha)$ disappears. Is $|F(c,\alpha)|$ governed by $\gcd(c,\alpha)$ and the divisor structure of $\alpha$?
\end{question}

\begin{question}\label{q:values}
For $\alpha=1$, equation \eqref{eq:weighted} reads $\Delta(n)=c-1$, with $\Delta$ the deficiency of \S\ref{sec:average}, so that $|F(c,1)|$ is the multiplicity of the value $c-1$ of $\Delta$ on composite integers outside the family $\{2p\}$, the analogue for $\Delta$ of the classical multiplicity problems for $\varphi$ \cite{Ford99}. For $1\le c\le2000$, $F(c,1)$ is empty for $341$ values of $c$, the first being $6,12,22,23,24,30,42,46,60,72$; the mean multiplicity is stable, close to $2$; and every multiplicity $0,1,\dots,9$ occurs, first at $c=6,2,4,1,3,27,15,403,1047,943$. Thus the analogue of Carmichael's conjecture fails for $\Delta$ ($F(c,1)$ is a singleton for $c=2,8,10$). Does every multiplicity occur, as Ford proved for $\varphi$? Does the set of $c$ with $F(c,1)=\varnothing$ have positive density? Its parity bias is striking: it contains about $30\%$ of the even values of $c$ against $5\%$ of the odd ones, an even $c$ requiring an odd $n$ by Theorem \ref{thm:parity}(iii).
\end{question}}

\begin{question}
\B{Theorem \ref{thm:iter} settles the iterated equation with $c=\alpha=1$, and Proposition \ref{prop:safe} shows that for $(c,\alpha)=(0,4)$ and $k=2$ the solution set is infinite if and only if there are infinitely many safe primes. Describe, for general $(c,\alpha)$ and $k\ge2$, the conditional families of the weighted iterated equation $\varphi(n)=c+\alpha\sum_{p\mid n}\varphi^{(k)}(p)$: they are of the form $\{mp\}$ with $\varphi(m)\le\alpha$ and $p$ subject to a condition $\varphi^{(k-1)}(p-1)=\ell p+r$, and one expects each of them to be infinite under Dickson's conjecture.}
\end{question}

\begin{question}
Theorem \ref{thm:average} describes the average of $\Delta(n)$. Determine the distribution of $\Delta(n)/\varphi(n)$, which is expected to tend to $1$ in mean.
\end{question}

Finally, apart from the squares of primes (OEIS A001248), none of the solution sets occurring here, in particular $\{4,12,30\}$, appears to be catalogued in the OEIS at the time of writing; submitting them would usefully anchor the problem in the literature.
}

\end{document}